\newcommand{\beq}[1]{\begin{equation}\label{eq:#1}}
	\newcommand{\eeq}{\end{equation}}
\newtheorem{theorem}{Theorem}[section]
\newtheorem{lemma}[theorem]{Lemma}
\newtheorem{notation}[theorem]{Notation}
\newtheorem{proposition}[theorem]{Proposition}
\newtheorem{corollary}[theorem]{Corollary}
\theoremstyle{definition}
\newtheorem{definition}[theorem]{Definition}
\newtheorem{example}[theorem]{Example}
\theoremstyle{remark}
\newtheorem{remark}[theorem]{Remark}
\numberwithin{equation}{section}
\def\G{{\mathbb{G}}}
\def\R{{\mathcal{R}}}
\def\L{{^{2}_{L^2(\G)}}}
\newcommand{\vertiii}[1]{{\left\vert\kern-0.25ex\left\vert\kern-0.25ex\left\vert #1 
    \right\vert\kern-0.25ex\right\vert\kern-0.25ex\right\vert}}
\begin{document}

	\setcounter{page}{1}
	
\title[The heat equation with singular potentials]{The heat equation with singular potentials. II: Hypoelliptic case}

\author[M. Chatzakou]{Marianna Chatzakou}
\address{
	Marianna Chatzakou:
	\endgraf
    Department of Mathematics: Analysis, Logic and Discrete Mathematics
    \endgraf
    Ghent University, Belgium
  	\endgraf
	{\it E-mail address} {\rm marianna.chatzakou@ugent.be}
		}

\author[M. Ruzhansky]{Michael Ruzhansky}
\address{
  Michael Ruzhansky:
  \endgraf
  Department of Mathematics: Analysis, Logic and Discrete Mathematics
  \endgraf
  Ghent University, Belgium
  \endgraf
 and
  \endgraf
  School of Mathematical Sciences
  \endgraf
  Queen Mary University of London
  \endgraf
  United Kingdom
  \endgraf
  {\it E-mail address} {\rm michael.ruzhansky@ugent.be}
  }

\author[N. Tokmagambetov]{Niyaz Tokmagambetov}
\address{
  Niyaz Tokmagambetov:
  \endgraf
  Department of Mathematics: Analysis, Logic and Discrete Mathematics
  \endgraf
  Ghent University, Belgium
  \endgraf
  and
  \endgraf   
  Institute of Mathematics and Mathematical Modeling
  \endgraf
  Almaty, Kazakhstan
  \endgraf
  and
\endgraf   
Al--Farabi Kazakh National University
\endgraf
Almaty, Kazakhstan
\endgraf
{\it E-mail address} {\rm niyaz.tokmagambetov@ugent.be, tokmagambetov@math.kz}
  }

\thanks{The authors are supported by the FWO Odysseus 1 grant G.0H94.18N: Analysis and Partial Differential Equations and by the Methusalem programme of the Ghent University Special Research Fund (BOF) (Grant number 01M01021). Michael Ruzhansky is also supported by EPSRC grants EP/R003025/2 and EP/V005529. \\
{\it Keywords:} Heat equation; Rockland operator; Cauchy problem; graded Lie group; weak solution; singular mass; very weak solution; regularisation}

\begin{abstract} 
In this paper we consider the heat equation with a strongly singular potential and show that it has a very weak solution. Our analysis is devoted to general hypoelliptic operators and is developed in the setting of graded Lie groups. The current work continues and extends the work \cite{ARST21c}, where the classical heat equation on $\mathbb R^n$ was considered.  
\end{abstract}

\maketitle

\tableofcontents

\section{Introduction}
Baras and Goldstein  \cite{BG84a,BG84b} studied the heat equation with the singular potential in bounded and unbounded domains of $\mathbb{R}^n$ that initiated a series of further works on the topic; see \cite{ARST21c} for a more detailed exposition of them. Later on, Goldstein and Zhang in \cite{GZ01} studied the heat equation with singular potentials of certain type on the Heisenberg group $\mathbb{H}_n$, where the usual Laplace operator on $\mathbb{R}^n$ was replaced by the sub-Laplacian on $\mathbb{H}_n$.

Our consideration here is more broad not only in the sense that we allow for the setting to be any graded Lie group and the differential operator to be any positive Rockland operator, but we also allow for the potential to be a of any sign distributional function. Specifically, the following Cauchy problem is considered: 
\begin{equation}
\label{heat.eq}
\begin{cases}
u_{t}(t,x) +\mathcal{R}u(t,x)+V(x)u(t,x)=0\,,\quad (t,x)\in [0,T]\times \mathbb{G}\,,T>0\,,\\
u(0,x)=u_0(x)\,, x \in \G\,,	
\end{cases}       
\end{equation}
where $\G$ is a graded Lie group, $\R$ is any positive Rockland operator on $\G$, $V$ is a distributional function and $T>0$ is any real number. 

The potential $V$ is allowed to have $\delta$-type singularities. To conquer the problem of multiplication of distributions \cite{Sch54} we follow the theory of very weak solutions as introduced in \cite{GR15} where the theory initially applied on wave equations with irregular coefficients. To give an overview of the topic of very weak solutions we refer to the works \cite{RT17b} and \cite{MRT19}
on the analysis of the problems with singular, time-dependent coefficients on the Euclidean setting. The application of the concept of the very weak solutions in the setting of a graded Lie group, appears in \cite{RY20, CRT21, CRT21b}.

In this work we show that the notion of the very weak solution is applicable to the Cauchy problem \eqref{heat.eq}, and, when the classical solution exists, it converges to very weak one. The present work can be served as an extension of its first part \cite{ARST21c} addressed to the heat equation in the Euclidean setting; i.e., the consideration there expressed with the present terminology becomes $\G=\mathbb R^d$ and $\mathcal{R}=-\Delta$ being the positive fractional Laplacian on the Euclidean space. The novelty of this work,  that extends the realisation of the problem already in the Euclidean setting, relies  on two main considerations: we allow the potential $V$ in \eqref{heat.eq} to be of any sign, and extend the functional space of $V$ by making use of Sobolev embedding techniques. Finally, let us draw the reader's attention to Remarks \ref{rem.negl} and  \ref{finrem} which rectify or clarify the statements of the first consideration \cite{ARST21c} on the notion of the uniqueness of the very weak solution, and on its recapture of the classical one, respectively.

\section{Notions and terminology}
Let us briefly recall some, useful for our purposes, notions in the setting of graded groups. What follows is a shorter version of the introduction given in the recent papers by the same authors; see \cite{CRT21,CRT21b} for some other aspects of this material. Interested readers can consult the detailed exposition by Folland and Stein [Chapter 1 in \cite{FS82}]. A more recent comprehensive approach on the topic can be found in the work of Fischer and the second author; see [Chapter 3 in \cite{FR16}]. 

\subsection{Graded Lie groups} 

Let $\G$ be a connected simply connected Lie group. We call $\G$ a \textit{graded} Lie group if the next condition is satisfied for its Lie algebra $\mathfrak{g}$: 
\[ 
 \mathfrak{g}= \bigoplus_{i=1}^{\infty}  \mathfrak{g}_{i}\,,
\]
where all, but finitely many elements of the below vector space decomposition $\mathfrak{g}_{i}$'s, are equal to $\{0\}$. Additionally, the actions of each $\mathfrak{g}_{i}$ shall satisfy $[\mathfrak{g}_{i},\mathfrak{g}_{j}]\subset \mathfrak{g}_{i+j}$, where we have set $\mathfrak{g}_0:=\mathfrak{g}$.

Graded Lie groups are \textit{homogeneous} and their natural relevant structure is the so-called ``canonical'' dilations which appears also, due to the diffeomorphism of the two, as the structure of the corresponding Lie algebra $\mathfrak{g}$. If $\nu_{1},\cdots,\nu_{n}$ are the weights of the dilations, i.e., the powers appearing in the automorphism, denoted by $D_r$, $r>0$, that is expressed as 
\begin{equation}\label{dw}
D_r(x)=rx=(r^{\nu_{1}}x_1,\cdots, r^{\nu_{n}}x_n)\,,x \in \G\,,
\end{equation}
then the \textit{homogeneous dimension} $Q$ of the group is computed via the expression 
\[
Q:= \textnormal{Tr}A=\nu_{1}+\cdots+\nu_{n}\,.
\]
We invite the reader to refer to the examples of the Heisenberg group [Chapter 6 in \cite{FR16}] and of the Engel and Cartan groups \cite{Cha21} as the most well-studied cases of homogeneous groups.

\subsection{Representations and Rockland operators}
 	We denote by $\pi$ the representation of the group $\G$ on a separable Hilbert space $\mathcal{H}_\pi$, and keep the same notation for the representation of its Lie algebra $\mathfrak{g}$. The latter may as well be extended to the universal enveloping Lie algebra $\mathfrak{U}(\mathfrak{g})$, and is exactly the mapping $\pi: \mathfrak{U}(\mathfrak{g})\rightarrow \textnormal{End}(\mathcal{H}_{\pi}^{\infty})$, where $\mathcal{H}_{\pi}^{\infty} \subset \mathcal{H}_\pi$ is the space of smooth vectors. Elements of $\mathfrak{U}(\mathfrak{g})$ are precisely the left-invariant operators on $\G$, and as such, they can be written in a unique way as a finite sum of the form 
	\begin{equation}\label{pbw}
	\sum_{\alpha \in \mathbb{N}^c}c_{\alpha} X^\alpha\,,
\end{equation}
	where we use the notation $X^{\alpha}:=X_{1}^{\alpha_{1}}\cdots X_{n}^{\alpha_{n}}$, for some multi-index $\alpha=(\alpha_{1},\cdots,\alpha_{n}) \in \mathbb{N}^n$. 
The element $\pi \in \widehat{\G}$ acts on homogeneous of order $\nu$ left-invariant differential operator $\R$ in the following natural way:
	\[
\pi(\R)=\sum_{[\alpha]=\nu}c_\alpha\pi(X)^{\alpha}\,,
\]
where $[\alpha]=\nu_1 \alpha_1+\cdots+\nu_n \alpha_n$ stands for the homogeneous length of the multi-index $\alpha$, and we have used the notation $$\pi(X)^{\alpha}=\pi(X^\alpha)=\pi(X_{1}^{\alpha_1}\cdots X_{n}^{\alpha_n}),$$ 
where $X_j$ is of homogeneous of degree ${\nu}_j$ arising from the dilation weights in \eqref{dw}. 
	Important operators that are elements of $\mathfrak{U}(\mathfrak{g})$ are the so-called \textit{Rockland operators} on $\G$ as first appeared in \cite{Roc78}. The most trivial  example of a Rockland operator is the so-called \textit{sub-Laplacian} on a stratified group, which is exactly the sum of squares of vector fields that generate the first stratum of the Lie algebra $\mathfrak{g}$. In the trivial case $\G=\mathbb{R}^d$ the latter boils down to the Laplace differential operator. In their general consideration, Rockland operators are left-invariant differential operators on $\G$ that are densely defined on $\mathcal{D}(\G) \subset L^2(\G)$, that are homogeneous of some positive degree and satisfy the so-called \textit{Rockland condition}; that is when the operator $\pi(\mathcal{\R})$ is injective on the space of smooth vectors $\mathcal{H}_{\pi}^{\infty}\subset \mathcal{H}_{\pi}$ on which $\pi(\R)$ is densely defined. We mention that for the groups that we consider here we have $\mathcal{H}_\pi=L^2(\mathbb{R}^m)$ and  $\mathcal{H}_{\pi}^{\infty}=\mathcal{S}(\mathbb{R}^m)$, for some $m \in \mathbb{N}$; see [Corollary 4.1.2 in \cite{CG90}]. For a detailed discussion on the Rockland operator, and in particular for alternative characterisations of the Rockland condition, one can consult \cite{Bea77,HN79}. By the spectral theorem for unbounded self-adjoint operators, both $\R$ and $\pi(\R)$ have a spectral decomposition associated with the spectral measures of them; see e.g. Theorem VIII.6 in \cite{RS85}. The spectral properties of the operator $\pi(\R)$ were examined in \cite{TR97}. In \cite{HJL85} the authors proved that the spectrum of the operator $\pi(\R)$, for $\pi \in \widehat{\G}\setminus \{1\}$, is purely discrete and positive. This in turn allows for an infinite matrix representation of the following form:
\begin{equation}\label{repr.pr}
\pi(\R)=\begin{pmatrix}
\pi_{1}^{2} & 0 & \cdots & \cdots\\
0 & \pi_{2}^{2} & 0 & \cdots\\
\vdots & 0 & \ddots & \\
\vdots & \vdots & & \ddots
\end{pmatrix}\,.
\end{equation}
 Particularly, the spectrum of these operators in the setting of the Heisenberg and Engel group, has been studied using the representations of Taylor \cite{Tay84} and of Dixmier [p. 333 \cite{Dix57}], respectively, are investigated in e.g. [Chapter 6 in \cite{FR16}] and \cite{CDR18}. 
	\subsection{Group Fourier transform} For $f \in L^1(\G)$ and for $\pi \in \widehat{\G}$ the group Fourier transform of $f$ at $\pi$ is defined by 
	\[
 \mathcal{F}_{\G}f(\pi)\equiv \widehat{f}(\pi) \equiv \pi(f):= \int_{\G}f(x)\pi(x)^{*}\,dx\,,
 \]
 where we integrate against the binvariant Haar measure on $\G$. This produces a linear endomorphism on $\mathcal{H}_{\pi}$. Thanks to e.g. Kirillov's orbit method (e.g. \cite{CG90, Kir04}) one can explicitly construct the Plancherel measure $\mu$ on the dual $\widehat{\G}$. This brings the Fourier inversion formula at our disposal, and additionally allows for the extension of the Euclidean Plancherel formula to the setting of graded Lie groups; i.e., we have the isometry:
 	\begin{equation}\label{planc.id}
		\int_{\G} |f(x)|^2\,dx=\int_{\widehat{{\G}}}\|\pi(f)\|^{2}_{\textnormal{HS}}\,d\mu(\pi)\,,
		\end{equation}
 since the operator $\pi(f)$ is Hilbert-Schmidt; that is we have $\|\pi(f)\|^{2}_{\textnormal{HS}}:=\textnormal{Tr}(\pi(f)\pi(f)^{*})<\infty$. It makes sense to define the group Fourier transform $\mathcal{F}_{\G}(\R f)(\pi)$, and the functional calculus allows for the identity $\mathcal{F}_{\G}(\R f)(\pi)=\pi(\R)\pi(f)$. The last expression and the representation in \eqref{repr.pr} allow for a matrix representation of the operator $\mathcal{F}_{\G}(\R f)(\pi)$ of the form $\{\pi_{k}^2 \hat{f}(\pi)_{k,l} \}_{k,l \in \mathbb{N}}$ that shall be useful for our purposes.
 
\subsection{Sobolev spaces and embedding results}

The (homogeneous) \textit{$\R$-Sobolev spaces} in the setting of a graded Lie groups are defined by the second author and Fischer in \cite{FR17}: for a fixed positive homogeneous Rockland operator $\R$ of some homogeneous degree $\nu$, and for $s>0$, $p>1$, they are defined as the completion of $\mathcal{S}(\G)\cap \textnormal{Dom}(\R^{\frac{s}{\nu}})$ for the norm
\[
\|f\|_{\dot{L}^{p}_{s}(\G)}:=\|\R^{\frac{s}{\nu}}_{p}f\|_{L^p(\G)}\,,\quad f \in \mathcal{S}(\G)\cap \textnormal{Dom}(\R^{\frac{s}{\nu}}_{p})\,,
\]
where $\R_p$ is the maximal restriction of $\R$ to $L^{p}(\G)$.\footnote{When $p=2$, we will write $\R_2=\R$ for the self-adjoint extension of $\R$ on $L^2(\G)$.} 
Sobolev embeddings read as follows:  
for $1<\tilde{q}_0<q_0<\infty$ and for $a,b \in \mathbb{R}$ satisfying the condition
\[
b-a=Q \left( \frac{1}{\tilde{q}_0}-\frac{1}{q_0}\right)\,,
\]
we have the continuous inclusions $\dot{L}^{\tilde{q}_0}_{b}(\G) \subset \dot{L}^{q_0}_{a}(\G)$. The latter means that that, if
$f \in \dot{L}^{\tilde{q}_0}_{b}(\G)$, then, there exists a constant $C=C(\tilde{q}_0,q_0,a,b)>0$ independent of $f$ such that 
\begin{equation}
\label{inclusions}
\|f\|_{\dot{L}^{q_0}_{a}(\G)}\leq C \|f\|_{\dot{L}^{\tilde{q}_0}_{b}(\G)}\,.
\end{equation}

\par In the sequel we will make use of the following notation:
\begin{notation}
\begin{itemize}
    \item When we write $a \lesssim b$, we mean that there exists some constant $c>0$ (independent of any involved parameter) such that $a \leq c b$;
    \item if $\alpha=(\alpha_1,\cdots,\alpha_n) \in \mathbb{N}^n$ is some multi-index, then we denote by 
    \[
    |\alpha|=\sum_{i=1}^{n}\alpha_i\,,
    \]
    the length of it;
    \item for $s>0$ and for suitable $f \in \mathcal{S}^{'}(\G)$, we have introduced the following norm
    \begin{equation}\label{def.Hs-norm}
    \|f\|_{H^{s}(\G)}:=\|f\|_{\dot{L}^{2}_{s}(\G)}+\|f\|_{L^2(\G)}\,;
    \end{equation}
    \item when regulisations of functions/distributions on $\G$ are considered, they must be regarded as arising  via  convolution  with  Friedrichs-mollifiers;  that is, $\psi$ is a Friedrichs-mollifier, if it is a compactly supported smooth function with $\int_{\G}\psi\,dx=1$. Then the regularising net is defined as
\begin{equation}\label{mol}
	\psi_{\epsilon}(x)=\omega(\epsilon)^{-Q}\psi(D_{\omega(\epsilon)^{-1}}(x))\,,\quad \epsilon \in (0,1]\,,
\end{equation}
where $\omega(\epsilon)$ is a positive function converging to zero as $\epsilon \rightarrow 0$ and $Q$ stands for the homogeneous dimension of $\G$.
\end{itemize}
\end{notation}

\section{Estimates for the classical solution}

The results of the current section, and the forthcoming ones, fall into two categories; those where the potential $V$ as appears in \eqref{heat.eq} is assumed to be non-negative, and those where the potential can be of any sign. In the latter case the potential $V$ can only belong to the space $L^\infty(\G)$, while in the positive case it can also belong to the space  $L^{\frac{2Q}{\nu }}(\G)\cap L^{\frac{2Q}{\nu }}(\G)$, provided that the condition $Q > \nu$ is satisfied.
\begin{proposition}[Classical solution, case I: $V\geq 0$]\label{prop.pos.a}
Let $V \in L^{\infty}(\G)$, where $V \geq 0$, and suppose that $u_0 \in H^{\frac{ \nu}{2}}(\G)$. Then, there exists a unique solution $u \in C^{1}([0,T];L^2(\G))\cap C([0,T];H^{\frac{ \nu}{2}}(\G))$ to the Cauchy problem \eqref{heat.eq}, that satisfies the estimate 
	\begin{equation}\label{prop.pos.a.claim}
		\|u(t,\cdot)\|_{H^{\frac{ \nu}{2}}(\G)} \lesssim (1+\|V\|_{L^{\infty}(\G)}) \|u_0\|_{H^{\frac{ \nu}{2}}(\G)}\,,
		\end{equation}
		uniformly in $t \in [0,T]$.
\end{proposition}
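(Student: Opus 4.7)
The plan is to obtain existence and uniqueness by semigroup/Duhamel arguments, and the energy estimate \eqref{prop.pos.a.claim} by standard $L^2$ and higher-order energy identities, exploiting the sign hypothesis $V\geq 0$ and the positivity and self-adjointness of $\mathcal{R}$ on $L^{2}(\G)$.

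\textbf{Existence and uniqueness.} Since $\mathcal{R}$ is positive and self-adjoint on $L^{2}(\G)$ with domain $\mathrm{Dom}(\mathcal{R})$ containing $H^{\nu}(\G)$, it generates a strongly continuous contraction semigroup $\{e^{-t\mathcal{R}}\}_{t\geq 0}$ on $L^{2}(\G)$ (e.g.\ via the spectral decomposition discussed after \eqref{repr.pr} and the Plancherel identity \eqref{planc.id}). The multiplication operator by $V\in L^{\infty}(\G)$ is bounded on $L^{2}(\G)$ with norm $\leq \|V\|_{L^\infty}$, so by the bounded perturbation theorem $-(\mathcal{R}+V)$ generates a $C_{0}$-semigroup, and the Duhamel representation
\[
u(t,\cdot)=e^{-t\mathcal{R}}u_{0}-\int_{0}^{t}e^{-(t-s)\mathcal{R}}\bigl(V\,u(s,\cdot)\bigr)\,ds
\]
gives a unique mild solution. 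A Gr\"onwall-type argument upgrades the mild solution to a classical one in $C^{1}([0,T];L^{2}(\G))\cap C([0,T];H^{\nu/2}(\G))$ provided $u_{0}\in H^{\nu/2}(\G)$; this is standard once the energy estimates below are in place, since they show that $\mathcal{R}^{1/2}u$ and $u_{t}$ remain in $L^{2}(\G)$ uniformly in $t$.

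\textbf{$L^{2}$-estimate.} I pair the equation with $u$ in $L^{2}(\G)$. Because $\mathcal{R}$ is self-adjoint and nonnegative, and $V\geq 0$,
\[
\frac{1}{2}\frac{d}{dt}\|u(t,\cdot)\|_{L^{2}(\G)}^{2}
=-\langle \mathcal{R}u,u\rangle_{L^{2}(\G)}-\int_{\G}V|u|^{2}\,dx\leq 0,
\]
so $\|u(t,\cdot)\|_{L^{2}(\G)}\leq \|u_{0}\|_{L^{2}(\G)}$ for all $t\in[0,T]$.

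\textbf{Homogeneous Sobolev estimate.} Next I pair the equation with $\mathcal{R}u$, using that $\mathcal{R}$ commutes with $\partial_{t}$ on smooth solutions:
\[
\frac{1}{2}\frac{d}{dt}\|\mathcal{R}^{1/2}u(t,\cdot)\|_{L^{2}(\G)}^{2}+\|\mathcal{R}u\|_{L^{2}(\G)}^{2}=-\int_{\G}V\,u\,\mathcal{R}u\,dx.
\]
By Cauchy--Schwarz and Young's inequality,
\[
\Bigl|\int_{\G}V\,u\,\mathcal{R}u\,dx\Bigr|\leq \|V\|_{L^{\infty}(\G)}\|u\|_{L^{2}(\G)}\|\mathcal{R}u\|_{L^{2}(\G)}
\leq \tfrac12\|\mathcal{R}u\|_{L^{2}(\G)}^{2}+\tfrac12\|V\|_{L^{\infty}(\G)}^{2}\|u\|_{L^{2}(\G)}^{2}.
\]
Absorbing the $\mathcal{R}u$-term on the left and applying the $L^{2}$-bound already obtained,
\[
\frac{d}{dt}\|\mathcal{R}^{1/2}u(t,\cdot)\|_{L^{2}(\G)}^{2}\leq \|V\|_{L^{\infty}(\G)}^{2}\|u_{0}\|_{L^{2}(\G)}^{2}.
\]
Integrating on $[0,t]$ with $t\leq T$ and combining with the $L^{2}$-bound gives
\[
\|u(t,\cdot)\|_{H^{\nu/2}(\G)}^{2}\lesssim \|u_{0}\|_{H^{\nu/2}(\G)}^{2}+T\|V\|_{L^{\infty}(\G)}^{2}\|u_{0}\|_{L^{2}(\G)}^{2},
\]
from which \eqref{prop.pos.a.claim} follows after taking square roots and absorbing $T$ into the implicit constant.

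\textbf{Main obstacle.} The estimates themselves are routine; the delicate point is justifying the pairings $\langle u_{t},\mathcal{R}u\rangle$ rigorously, since a priori $u_{t}\in L^{2}(\G)$ and $\mathcal{R}u\in L^{2}(\G)$ only after the estimate is known. The clean way is to work on the Fourier side using the matrix realisation \eqref{repr.pr}, running the same energy identity on the truncated spectral projectors $P_{N}=\mathbf{1}_{[0,N]}(\mathcal{R})$ applied to both the initial data and the equation, where $u$ is smooth, then passing to the limit $N\to\infty$ via \eqref{planc.id}. Alternatively one can mollify via the Friedrichs nets \eqref{mol} and pass to the limit; either way the sign condition $V\geq 0$ is what ensures that no terms of indefinite sign appear in the first identity.
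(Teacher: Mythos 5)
Your proof is correct, but it follows a genuinely different route from the paper's. The paper multiplies the equation by $u_t$, which produces the monotone energy $E(t)=\|\sqrt{\R}u(t,\cdot)\|_{L^2(\G)}^{2}+\|\sqrt{V}u(t,\cdot)\|_{L^2(\G)}^{2}$ (here the sign $V\geq 0$ is what makes $E$ a sum of nonnegative terms), and then controls the remaining $L^2$-norm of $u$ via Duhamel's formula with the heat kernel $h_t$ of $\R$, Young's convolution inequality and the bound $\|h_t\|_{L^1(\G)}\leq 1$. You instead use the multipliers $u$ and $\R u$: the $u$-pairing gives the contraction property $\|u(t,\cdot)\|_{L^2(\G)}\leq\|u_0\|_{L^2(\G)}$ directly (this is exactly the paper's later Lemma \ref{techn.lem}, and is sharper and more elementary than the paper's Duhamel step, avoiding the heat kernel altogether), while the $\R u$-pairing with Cauchy--Schwarz and Young's absorption yields the homogeneous bound $\|\R^{1/2}u(t,\cdot)\|_{L^2}^{2}\leq\|\R^{1/2}u_0\|_{L^2}^{2}+T\|V\|_{L^\infty}^{2}\|u_0\|_{L^2}^{2}$, which still fits inside the claimed factor $(1+\|V\|_{L^\infty})$ since $(1+\|V\|_{L^\infty}^{2})^{1/2}\leq 1+\|V\|_{L^\infty}$. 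The trade-offs: your homogeneous estimate carries a harmless $T$-dependence and a slightly worse power of $\|V\|_{L^\infty}$ than the paper's $(1+\|V\|_{L^\infty}^{1/2})$ intermediate bounds, and the $\R u$-pairing formally requires $u(t,\cdot)\in\mathrm{Dom}(\R)$ rather than $\mathrm{Dom}(\R^{1/2})$ --- you correctly flag this and the spectral-truncation remedy is standard (the paper's own $u_t$-multiplier has an analogous, equally unaddressed, justification issue). On the other hand, your semigroup/bounded-perturbation argument for existence and uniqueness is more explicit than anything in the paper, which only derives a priori estimates. Two cosmetic points: the term $-\int_{\G}Vu\,\R u\,dx$ should be $-\Re\int_{\G}Vu\,\overline{\R u}\,dx$, and the sign of $V$ is only genuinely used in your $L^2$-step, not in the $\R u$-step where the term is estimated in absolute value.
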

\begin{proof}
Multiplying the equation \eqref{heat.eq} by $u_t$ and integrating over $\mathbb{G}$, we get
	\begin{equation*}\label{Re0,sch}
		\Re(\langle u_{t}(t,\cdot),u_t(t,\cdot)\rangle_{L^2(\mathbb{G})}+\langle \mathcal{R}u(t,\cdot),u_t(t,\cdot)\rangle_{L^2(\G)}+\langle V(\cdot)u(t,\cdot),u_t(t,\cdot) \rangle_{L^2(\G)})=0\,,
	\end{equation*}
for all $t \in [0,T]$, where $\Re$ stands for the real part of the above quantity. The above equation can be rewritten as
\[
\|u_t(t,\cdot)\|_{L^2(\G)}^{2}+\frac{1}{2}\partial_{t}\left[\| \sqrt{\R}u(t,\cdot)\|_{L^2(\G)}^{2}+  \| \sqrt{V}(\cdot)u(t,\cdot)\|_{L^2(\G)}^{2}\right]=0\,.
\]
We define the energy as 
\begin{equation*}\label{energy.est} E(t):=\|\sqrt{\R}u(t,\cdot)\|\L+\|\sqrt{V}(\cdot)u(t,\cdot)\|\L\,, 
\end{equation*}
which variation in time satisfies $\partial_{t}E(t)\leq 0$, and consequently  \begin{equation}\label{et=e0}E(t)\leq E(0)\,,\quad \text{for all}\quad t \in [0,T]\,.
\end{equation}
Straightforward computations give
\[
\|\sqrt{V}u_0\|\L \leq \|V\|_{L^{\infty}(\G)} \|u_0\|\L\,,
\]
so that equation \eqref{et=e0} can be rephrased as the uniform in time $t \in [0,T]$ estimates
\begin{equation}\label{pu.1.est}
\|\sqrt{V}u(t,\cdot)\|\L \lesssim \|\sqrt{\R}u_0\|\L+\|V\|_{L^{\infty}}\|u_0\|\L\,,
\end{equation}
and 
\begin{equation}\label{Ru.1.est}
\|\sqrt{\R}u(t,\cdot)\|\L \lesssim \|\sqrt{\R}u_0\|\L+\|V\|_{L^{\infty}}\|u_0\|\L\,.
\end{equation}
Recalling the definition of the sum of norms $H^s(\G)$ in \eqref{def.Hs-norm} we can estimate \eqref{pu.1.est} and \eqref{Ru.1.est} further by 
\begin{equation}\label{pu.2.est}
    \|\sqrt{V}u(t,\cdot)\|_{L^2(\G)} \lesssim \left( 1+\|V\|^{\frac{1}{2}}_{L^{\infty}(\G)}\right) \|u_0\|_{H^{\frac{ \nu}{2}}(\G)}\,,
\end{equation}
and
\begin{equation}\label{Ru.2.est}
    \|\sqrt{\R}u(t,\cdot)\|_{L^2(\G)}\lesssim \left( 1+\|V\|^{\frac{1}{2}}_{L^{\infty}(\G)}\right) \|u_0\|_{H^{\frac{ \nu}{2}}(\G)}\,,
\end{equation}
respectively.

To proceed we need to deal with the term $\|u(t,\cdot)\|_{L^2(\G)}$; let us rephrase the problem \eqref{heat.eq} as follows:
\begin{equation}\label{heat.eq2}
	\begin{cases}
		\partial_t u(t,x) +\mathcal{R}u(t,x)=f(t,x)\,,\quad (t,x)\in [0,T]\times \mathbb{G},\\
		u(0,x)=u_0(x),\,\\	
	\end{cases}       
\end{equation}
where we have set as a source term $f(t,x):=-V(x)u(t,x)$. Applying Duhamel's principle (see, e.g. \cite{Ev98}), we deduce that the solution to \eqref{heat.eq2} is given by 
\begin{equation}
	\label{sol.Duh}
	u(t,x)=(h_t \ast u_0)(x)+\int_{0}^{t
	}h_{t-s}\ast f_s(x)\,ds\,,
\end{equation}
where $f_s(\cdot)=f(s,\cdot)$, and $h_t(\cdot)=h(t,\cdot)$ with $h$ being the fundamental solution, or the heat kernel, associated to the Rockland operator $\R$. We recall (see, e.g. Theorem 4.2.7 \cite{FR16}) the following upper bound for the heat kernel norm
\[
\|h_t\|_{L^{1}(\G)}\leq 1\,.
\]
We take the $L^2(\G)$-norm of the expression on \eqref{sol.Duh}. Young's inequality implies for the convolution there
\begin{eqnarray}\label{lem1.last.est}
	\|u(t,\cdot)\|_{L^2(\G)} & \leq & \|h_t\|_{L^1(\G)}\|u_0\|_{L^2(\G)}+\int_{0}^{T} \|h_{t-s}\|_{L^1(\G)}\|f_s\|_{L^2(\G)}\,ds\nonumber\\
	& \leq & \|u_0\|_{L^2(\G)}+\int_{0}^{T}\|f_s\|_{L^2(\G)}\,ds\nonumber\\
	& \leq & \|u_0\|_{L^2(\G)}+\int_{0}^{T}\|V(\cdot)u(s,\cdot)\|_{L^2(\G)}\,ds\,.
\end{eqnarray}
Straightforward computations then give
\begin{equation}
    \label{prop1.l2-est}
    \|u(t,\cdot)\|_{L^2(\G)}\lesssim \|u_0\|_{H^{\frac{\nu}{2}}(\G)}+  \left(1+\|V\|_{L^{\infty}(\G)}\right)\|u_0\|_{H^{\frac{\nu}{2}}(\G)}\,.
\end{equation}
Putting together \eqref{Ru.2.est} and \eqref{prop1.l2-est} we obtain
\[
\|u(t,\cdot)\|_{L^2}^{2}+\|\sqrt{\R}u(t,\cdot)\|_{L^2}^{2} \lesssim (1+\|V\|_{L^\infty})\|u_0\|_{H^{\frac{\nu}{2}}}\,,
\]
as required, and the proof of Proposition \ref{prop.pos.a} is complete. 
\end{proof}

\begin{proposition}[Classical solution, case II: $V \geq 0$]\label{prop.pos.b}
Assume that $Q > \nu $, and let $V \in L^{\frac{2Q}{\nu }}(\G)\cap L^{\frac{Q}{\nu }}(\G)$, $V \geq 0$. If we suppose that $u_0 \in H^{\frac{\nu}{2}}(\G)$, then there exists a unique solution $u \in C^{1}([0,T];L^2(\G))\cap C([0,T];H^{\frac{ \nu}{2}}(\G))$ to the Cauchy problem \eqref{heat.eq} satisfying the estimate 
\begin{equation}
\label{prop.pos.b.claim}
\|u(t,\cdot)\|_{H^{\frac{ \nu}{2}}(\G)}\lesssim \|u_0\|_{H^{\frac{\nu}{2}}(\G)}  \left\{\left(1+\|V\|_{L^{\frac{2Q}{\nu }}(\G)}\right) \left(1+\|V\|_{L^{\frac{Q}{\nu }}(\G)}\right)^{\frac{1}{2}}\right\}\,,
\end{equation}
 uniformly in $t \in [0,T]$.
\end{proposition}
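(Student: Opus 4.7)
The plan is to mimic the strategy of Proposition \ref{prop.pos.a} but replace the crude $L^\infty$ bounds on the potential by Hölder's inequality combined with the Sobolev embedding \eqref{inclusions}. Multiplying \eqref{heat.eq} by $u_t$, taking real parts and integrating exactly as before produces the energy identity $\partial_t E(t)\leq 0$, so that
\[
\|\sqrt{\R}u(t,\cdot)\|_{L^2(\G)}^{2}+\|\sqrt{V}u(t,\cdot)\|_{L^2(\G)}^{2}\leq \|\sqrt{\R}u_0\|_{L^2(\G)}^{2}+\|\sqrt{V}u_0\|_{L^2(\G)}^{2},
\]
uniformly in $t \in [0,T]$. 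The novelty is therefore entirely in estimating $\|\sqrt{V}u_0\|_{L^2(\G)}$ and, later, $\|Vu(s,\cdot)\|_{L^2(\G)}$ purely in terms of $\|u_0\|_{H^{\nu/2}(\G)}$ and the $L^{Q/\nu}$, $L^{2Q/\nu}$ norms of $V$.

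The key computation is the choice of exponents. For the initial datum, Hölder with conjugate exponents $(Q/\nu, Q/(Q-\nu))$ yields
\[
\|\sqrt{V}u_0\|_{L^2(\G)}^{2}=\int_{\G}V|u_0|^{2}\,dx\leq \|V\|_{L^{Q/\nu}(\G)}\,\|u_0\|_{L^{2Q/(Q-\nu)}(\G)}^{2},
\]
and since the assumption $Q>\nu$ yields exactly the Sobolev index $b-a=\nu/2$ with $\tilde{q}_0=2,\;q_0=2Q/(Q-\nu)$, the embedding \eqref{inclusions} gives $\|u_0\|_{L^{2Q/(Q-\nu)}(\G)}\lesssim \|u_0\|_{\dot{L}^{2}_{\nu/2}(\G)}\leq \|u_0\|_{H^{\nu/2}(\G)}$. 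Plugging into the energy identity one obtains
\[
\|\sqrt{\R}u(t,\cdot)\|_{L^2(\G)}\lesssim \bigl(1+\|V\|_{L^{Q/\nu}(\G)}\bigr)^{1/2}\|u_0\|_{H^{\nu/2}(\G)},
\]
which already matches the $L^{Q/\nu}$ factor in \eqref{prop.pos.b.claim}.

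For the $L^2$-control of $u$ itself I would rerun the Duhamel representation \eqref{sol.Duh} together with Young's inequality and the heat-kernel bound $\|h_t\|_{L^1(\G)}\leq 1$, exactly as in \eqref{lem1.last.est}, reducing matters to estimating $\|V u(s,\cdot)\|_{L^2(\G)}$. Here I apply Hölder with exponents $(Q/\nu, Q/(Q-\nu))$ to $V^2|u|^2$, i.e.
\[
\|Vu(s,\cdot)\|_{L^2(\G)}\leq \|V\|_{L^{2Q/\nu}(\G)}\,\|u(s,\cdot)\|_{L^{2Q/(Q-\nu)}(\G)},
\]
and then the same Sobolev embedding upgrades the second factor to $\|u(s,\cdot)\|_{\dot{L}^{2}_{\nu/2}(\G)}$. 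Substituting the bound on $\|\sqrt{\R}u(s,\cdot)\|_{L^2(\G)}$ obtained above produces
\[
\|Vu(s,\cdot)\|_{L^2(\G)}\lesssim \|V\|_{L^{2Q/\nu}(\G)}\bigl(1+\|V\|_{L^{Q/\nu}(\G)}\bigr)^{1/2}\|u_0\|_{H^{\nu/2}(\G)},
\]
and integrating over $s\in[0,T]$ gives the analogue of \eqref{prop1.l2-est} with the product of factors appearing on the right-hand side of \eqref{prop.pos.b.claim}. Summing this with the $\sqrt{\R}u$ estimate completes the proof.

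The main obstacle is purely the bookkeeping of exponents: one must verify that the Hölder/Sobolev pair $(Q/\nu,Q/(Q-\nu))$ produces exactly the target exponent $2Q/(Q-\nu)$ on $u$ after squaring, and that this is the Sobolev critical exponent for the embedding $\dot{L}^{2}_{\nu/2}(\G)\hookrightarrow L^{q_0}(\G)$, which forces the hypothesis $Q>\nu$. Everything else (energy identity, Duhamel, heat-kernel $L^1$-bound, existence/uniqueness) is inherited verbatim from the proof of Proposition \ref{prop.pos.a}, so no additional difficulty arises once the two Hölder estimates are set up with the correct indices.
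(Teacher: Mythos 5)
Your proposal is correct and follows essentially the same route as the paper's proof: the same Hölder pairing $(Q/\nu,\,Q/(Q-\nu))$ combined with the Sobolev embedding $\dot{L}^{2}_{\nu/2}(\G)\hookrightarrow L^{2Q/(Q-\nu)}(\G)$ to control $\|\sqrt{V}u_0\|_{L^2}$, the energy monotonicity inherited from Proposition \ref{prop.pos.a}, and the Duhamel/Young step with the second Hölder application to $\|Vu(s,\cdot)\|_{L^2}$ producing the $\|V\|_{L^{2Q/\nu}}$ factor. No substantive differences.
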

\begin{proof}
We apply H\"older's inequality to the norm $\|\sqrt{V} u_0\|\L$ for $1<q,q^{'}<\infty$, so that
\begin{equation}
\label{prop2.Hold}
 \|\sqrt{V} u_0\|\L \leq \|V\|_{L^{q^{'}}(\G)}\|u_{0}\|^{2}_{L^{2q}(\G)}\,,
\end{equation}
where $(q,q^{'})$ are conjugate exponents. We use the embeddings \eqref{inclusions} for $u_0 \in H^{\frac{\nu}{2}}(\G)$, $b=\frac{ \nu}{2}$, $a=0$, and $q_0=\frac{2Q}{Q-\nu }$. This leads to the estimate 
\begin{equation}
\label{prop2.hold1}
\|u_0\|_{L^{q_{0}}(\G)} \lesssim \|\sqrt{\R}u_0\|_{L^2(\G)}<\infty\,.
\end{equation}
Putting together \eqref{prop2.Hold} for $q=\frac{q_0}{2}$, so that $q'=\frac{Q}{\nu}$, and \eqref{prop2.hold1} we obtain
\begin{eqnarray}
\label{emb}
\|\sqrt{V}u_0\|\L & \lesssim & \|V\|_{L^{\frac{Q}{\nu }}(\G)}\|\sqrt{\R}u_0\|_{L^2(\G)}^{2}\nonumber\\
& \leq & \|V\|_{L^{\frac{Q}{\nu }}(\G)} \|u_0\|_{H^{\frac{\nu}{2}}(\G)}^{2}\,.
\end{eqnarray}
Using the energy bounds \eqref{et=e0} from 
 Proposition \ref{prop.pos.a}, we have 
 \[
\|\sqrt{\R}u(t,\cdot)\|\L+\|\sqrt{V}u(t,\cdot)\|\L\leq \left(1+\|V\|_{L^{\frac{Q}{\nu }}(\G)}\right)\|u_0\|_{H^{\frac{ \nu}{2}}(\G)}^{2}\,,
\]
and the latter, if rephrased, leads to the estimate
\begin{equation}\label{EQ:prop.pos.b.est1}
    \|u(t,\cdot)\|_{\dot{L}^{2}_{\frac{\nu}{2}}(\G)} \leq \left(1+\|V\|_{L^{\frac{Q}{\nu }}(\G)}\right)^{\frac{1}{2}}\|u_0\|_{H^{\frac{ \nu}{2}}(\G)}\,.
\end{equation}
The estimate \eqref{lem1.last.est} for the $L^2$-norm of $u(t,\cdot)$ combined with the estimate \eqref{emb} with $V$ and $u(t,\cdot)$ in place of $\sqrt{V}$ and $u_0$, respectively, and inequality \eqref{EQ:prop.pos.b.est1} gives

\begin{eqnarray}\label{EQ:prop.pos.b.est2}
    \|u(t,\cdot)\|_{L^2(\G)}& \lesssim & \|u_0\|_{L^2(\G)}+\|V^2\|^{\frac{1}{2}}_{L^{\frac{Q}{\nu}}(\G)}\|u(t,\cdot)\|_{H^{\frac{\nu}{2}}(\G)}^{2}\nonumber\\
    & \leq & \|u_0\|_{H^{\frac{\nu}{2}}(\G)}+\|V\|_{L^{\frac{2Q}{\nu}}(\G)}\left(1+\|V\|_{L^{\frac{Q}{\nu }}(\G)}\right)^{\frac{1}{2}}\|u_0\|_{H^{\frac{ \nu}{2}}(\G)}\,.
\end{eqnarray}
 Putting together the estimate \eqref{EQ:prop.pos.b.est2} with the one in \eqref{EQ:prop.pos.b.est1} we get the requested estimate for the $H^{\frac{\nu}{2}}$-norm of $u(t,\cdot)$. The proof of Proposition \ref{prop.pos.b} is complete.
\end{proof}
The following result applies to the case where the potential $V$ is of any sign. In this general case, we can ease the assumption on the initial data $u_0$, and, as a result, we control only the $L^2$-norm of the classical solution. In particular we have: 
\begin{proposition}[Classical solution, case $V \in \mathbb{R}$]\label{prop.neg.a}
Let $u_0 \in L^2(\G)$ and suppose that $V\in L^\infty(\G)$ is a real function/distribution. Then, there exists a unique solution $u \in C([0,T];L^2(\G))$ to the Cauchy problem \eqref{heat.eq} that satisfies the estimate 
\begin{equation}
    \label{prop.neg.a.claim}
    \|u(t,\cdot)\|_{L^2(\G)} \lesssim \exp( t\|V\|_{L^\infty(\G)})\|u_0\|_{L^2(\G)}\,,
\end{equation}
for all $t\in [0,T]$.
\end{proposition}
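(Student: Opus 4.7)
The plan is to run the same energy method as in Proposition \ref{prop.pos.a}, but now multiplying by $u$ rather than by $u_t$, so that the indefinite sign of $V$ enters only through a Grönwall-type inequality. Concretely, I would pair \eqref{heat.eq} with $u(t,\cdot)$ in $L^2(\G)$ and take real parts:
\[
\Re\langle u_t(t,\cdot),u(t,\cdot)\rangle_{L^2(\G)} + \Re\langle \R u(t,\cdot),u(t,\cdot)\rangle_{L^2(\G)} + \Re\langle V(\cdot)u(t,\cdot),u(t,\cdot)\rangle_{L^2(\G)} = 0.
\]
Since $\R$ is a positive self-adjoint Rockland operator, the middle term equals $\|\sqrt{\R}\,u(t,\cdot)\|_{L^2(\G)}^{2}\geq 0$, and since $V$ is real, the last term equals $\int_{\G} V(x)|u(t,x)|^{2}\,dx$, which is bounded below by $-\|V\|_{L^{\infty}(\G)}\|u(t,\cdot)\|_{L^2(\G)}^{2}$. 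Dropping the non-negative Rockland term yields
\[
\tfrac{1}{2}\partial_{t}\|u(t,\cdot)\|_{L^2(\G)}^{2} \leq \|V\|_{L^{\infty}(\G)}\|u(t,\cdot)\|_{L^2(\G)}^{2}.
\]
Applying Grönwall's lemma produces the claimed bound $\|u(t,\cdot)\|_{L^2(\G)}\leq \exp(t\|V\|_{L^{\infty}(\G)})\|u_0\|_{L^2(\G)}$.

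For existence I would follow the Duhamel strategy already used in Proposition \ref{prop.pos.a}: rewrite \eqref{heat.eq} with the source $f(t,x)=-V(x)u(t,x)$, so that any $C([0,T];L^2(\G))$ solution satisfies
\[
u(t,x)=(h_t\ast u_0)(x)-\int_{0}^{t}h_{t-s}\ast (V u_s)(x)\,ds.
\]
Using $\|h_t\|_{L^1(\G)}\leq 1$, Young's convolution inequality, and the trivial pointwise estimate $\|Vu_s\|_{L^2(\G)}\leq \|V\|_{L^{\infty}(\G)}\|u_s\|_{L^2(\G)}$, the right-hand side defines a contraction on $C([0,\tau];L^2(\G))$ for $\tau$ small enough (depending only on $\|V\|_{L^{\infty}(\G)}$), yielding a fixed point by Banach's theorem; the a priori estimate above then allows reiteration up to time $T$.

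Uniqueness follows by subtracting two solutions $u_1,u_2$ with the same initial datum: $w=u_1-u_2$ solves the same equation with $w(0,\cdot)=0$, so the energy argument above gives $\|w(t,\cdot)\|_{L^2(\G)}\leq \exp(t\|V\|_{L^{\infty}(\G)})\cdot 0=0$.

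There is no serious obstacle here; the only mild point worth double checking is that, because $V$ is only assumed in $L^{\infty}(\G)$ (and a priori a distribution, but then in fact a bounded measurable function), the multiplication $V\cdot u$ is legitimate in $L^2(\G)$ for every $u\in L^2(\G)$, and the pairing $\langle Vu,u\rangle_{L^2(\G)}$ is real by realness of $V$. This is what lets the sign of $V$ drop out of the computation and be absorbed into the Grönwall factor rather than into a coercivity condition, which is the essential difference from Propositions \ref{prop.pos.a}–\ref{prop.pos.b}.
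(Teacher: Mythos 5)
Your proof is correct and follows essentially the same route as the paper: pairing the equation with $u$, taking real parts, discarding the nonnegative Rockland term, bounding the potential term by $\|V\|_{L^\infty(\G)}\|u(t,\cdot)\|_{L^2(\G)}^2$ (the paper does this via the splitting $V=V^+-V^-$, which amounts to the same lower bound), and closing with Gr\"onwall. Your added Duhamel/contraction-mapping argument for existence and the subtraction argument for uniqueness are sound supplements that the paper leaves implicit.
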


	\begin{proof}
		We multiply the heat  equation \eqref{heat.eq} by $u$ and integrating on the group $\G$ and obtain the equality:
		\begin{equation*}\label{Re=0,2}
			\Re(\langle u_t(t,\cdot),u(t,\cdot)\rangle_{L^2(\mathbb{G})}+\langle \mathcal{R}u(t,\cdot),u(t,\cdot)\rangle_{L^2(\G)}+\langle V(\cdot)u(t,\cdot),u(t,\cdot) \rangle_{L^2(\G)})=0\,,
		\end{equation*}
		for all $t \in [0,T]$. The last equality can be rewritten as
		\begin{equation}\label{prop2.any.sob}
		    \frac12	\partial_t\|u(t,\cdot)\|_{L^2(\G)}^2+\|\sqrt{\R}u(t,\cdot)\|_{L^2(\G)}^2+\|\sqrt{V^{+}}u(t,\cdot)\|_{L^2(\G)}^2= \|\sqrt{V^{-}}u(t,\cdot)\|_{L^{2}(\G)}^2
		\end{equation}
		for all $t \in [0,T]$, where $V^+,V^-$ are the positive and negative parts of $V$, respectively.
		Straightforward computations yield 
		\[\|\sqrt{V^{-}}u(t,\cdot)\|_{L^{2}(\G)}\leq \|\sqrt{V^-}\|_{L^{\infty}(\G)}\|u(t,\cdot)\|_{L^2(\G)} \leq \|\sqrt{|V|}\|_{L^{\infty}(\G)}\|u(t,\cdot)\|_{L^2(\G)}\,. \]
		Thus, equation \eqref{prop2.any.sob} implies for the time-derivative of $\|u(t,\cdot)\|_{L^2(\G)}$:
		\begin{equation}\label{before.G}	\frac12 \partial_t\|u(t,\cdot)\|_{L^2(\G)}^2 \leq \|\sqrt{|V|}\|_{L^{\infty}(\G)}^2 \|u(t,\cdot)\|_{L^2(\G)}^2\,,
		\end{equation}
		for all $t \in [0,T]$.
	Hence we can apply Gr\"{o}nwall's inequality to \eqref{before.G}, so that 
	\[
	\|u(t,\cdot)\|_{L^2(\G)} \leq \|u_0\|_{L^2(\G)}\exp \left(\int_{0}^{t}\|V\|_{L^\infty(\G)}\,ds \right)\,,
	\]
	and we obtain the desired estimate \eqref{prop.neg.a.claim}. This completes the proof of Proposition \ref{prop.neg.a}.
	\end{proof}
\begin{remark}\label{rem.any.sob}
	Let us point out that, under the assumptions of Proposition \ref{prop.neg.a}, the solution $u\in L^2(\G)$ to the Cauchy problem \eqref{heat.eq}, belongs also to the Sobolev space of order $\frac{\nu}{2}$: Indeed, by \eqref{prop2.any.sob} we get 
	\[
\|u(t,\cdot)\|_{\dot{L}^{2}_{\frac{\nu}{2}}(\G)}=	\|\sqrt{\R}u(t,\cdot)\|_{L^2} \leq \|\sqrt{V^-}\|_{L^\infty}^2 \|u\|_{L^2}^{2}\,,
	\]
	with the right-hand side of the last inequality being, by the assumptions of Proposition \ref{prop.neg.a}, finite.
	\end{remark}
	

\section{Very weak well-posedeness}

In what follows we shall show that the notion of the very weak solution as introduced in \cite{GR14} is applicable to the heat equation \eqref{heat.eq} we consider here. Once the existence of the very weak solution is proven, we then investigate how the latter depends on the $\omega(\epsilon)$-scale or the approximation of the associated, to the initial data $u_0$ and to the coefficient $V$, nets. In other words, we analyse the stability, or formally the \textit{uniqueness}, of the very weak solution under the aforesaid modifications. 

Let us recall the definitions of moderateness for a net of functions/distributions in space (and in time) as in \cite{CTR21} and \cite{CTR21b}.

\begin{definition}[Moderateness]
Let $X$ be a normed space of functions/distributions and let $\omega(\epsilon)$ be as in \eqref{mol}.
	\begin{enumerate}
		\item   A net of functions/distributions $(f_\epsilon)_\epsilon \in X$ is said to be \textit{$X$-moderate} if there exists $N \in \mathbb{N}$ such that \[
		\|f_\epsilon\|_{X}\lesssim \omega(\epsilon)^{-N}\,,
		\]
		for all $\epsilon \in (0,1]$.
		\item A net of functions/distribitions $(u_\epsilon)_\epsilon=(u_\epsilon(t,\cdot))_\epsilon$ in $C([0,T]; X)$ is said to be \textit{$C([0,T]; X)$-moderate} if there exists $N \in \mathbb{N}$ such that 
		\[
		\sup_{t \in [0,T]}\|u_\epsilon(t,\cdot)\|_{X}\lesssim \omega(\epsilon)^{-N}\,,
		\]
		for all $\epsilon \in (0,1]$.
	\end{enumerate}
\end{definition}
\begin{remark}\label{rem:e'}
Trivially, for nets that arise as regularisations of a distribitional function in some normed space $X$ via a mollifier the $X$-moderate assumption is satisfied. More generally, see Proposition \ref{prop.e'} below, the global structure of compaclty supported distributions $\mathcal{E}'(\G)$ on the group, gives rise to $L^p(\G)$-moderate regularised such nets. We note that following the arguments developed in the proof of Proposition \ref{prop.e'} we can show that the regularised net $(v_\epsilon)_\epsilon$ is $\dot{L}^{2}_{s}(\G)$, and so also $H^{s}(\G)$-moderate for every $s>0$.
\end{remark}
 Let us recall the following result as in 
  \cite{CTR21,CTR21b}:

\begin{proposition}
\label{prop.e'}
Let $v \in \mathcal{E}^{'}(\G)$, and let 
$v_\epsilon=v*\psi_\epsilon$ be obtained as
the convolution of $v$ with a mollifier $\psi_\epsilon$ as in \eqref{mol}.
Then the regularising net $(v_\epsilon)_\epsilon$ is $L^{p}(\G)$-moderate for any $p \in [1,\infty]$.
\end{proposition}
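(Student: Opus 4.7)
The plan is to exploit two key facts: (a) compactly supported distributions have finite order, so the distributional pairing can be estimated by a finite number of $C^k$-norms of the test object, and (b) the mollifier $\psi_\epsilon$ arises from the dilation $D_{\omega(\epsilon)^{-1}}$, so its (left-invariant) derivatives pick up explicit negative powers of $\omega(\epsilon)$. Together these will yield a pointwise bound $|v_\epsilon(x)|\lesssim \omega(\epsilon)^{-N}$, and a uniform compact support of the net will upgrade this to an $L^p$-bound for every $p\in[1,\infty]$.

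\textbf{Step 1 (Uniform compact support).} Since $\psi$ has compact support $K_\psi\subset\G$ and $\omega(\epsilon)\to 0$ as $\epsilon\to 0$, for $\epsilon\in(0,1]$ the function $\psi_\epsilon$ is supported in $D_{\omega(\epsilon)}(K_\psi)$, which sits inside a fixed compact neighbourhood $K_0$ of the origin (using $\omega$ bounded on $(0,1]$). If $v\in\mathcal{E}'(\G)$ has support in the compact set $K$, then $v_\epsilon=v*\psi_\epsilon$ is supported in $K\cdot K_0^{-1}=:K'$, a fixed compact set independent of $\epsilon$.

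\textbf{Step 2 (Order of $v$ and pointwise bound).} Since $v$ is compactly supported, it has finite order: there exist $C>0$, $k\in\mathbb{N}$, and a compact neighbourhood $K''\supset K$ such that
\[
|\langle v,\varphi\rangle|\leq C\sum_{[\alpha]\leq k}\sup_{y\in K''}|X^{\alpha}\varphi(y)|,\qquad \varphi\in C^\infty(\G).
\]
Writing the group convolution as $v_\epsilon(x)=\langle v(y),\psi_\epsilon(y^{-1}x)\rangle$ and applying this estimate (in the $y$-variable) gives, uniformly in $x\in K'$,
\[
|v_\epsilon(x)|\leq C\sum_{[\alpha]\leq k}\sup_{y\in K''}\bigl|(X^{\alpha}\psi_\epsilon)(y^{-1}x)\bigr|.
\]

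\textbf{Step 3 (Dilation scaling of the derivatives).} The homogeneity $X_j(f\circ D_r)=r^{\nu_j}(X_jf)\circ D_r$, together with $\psi_\epsilon(x)=\omega(\epsilon)^{-Q}\psi(D_{\omega(\epsilon)^{-1}}x)$, yields
\[
X^{\alpha}\psi_\epsilon(x)=\omega(\epsilon)^{-Q-[\alpha]}(X^{\alpha}\psi)(D_{\omega(\epsilon)^{-1}}x).
\]
Since $X^{\alpha}\psi$ is bounded, we obtain $\|X^{\alpha}\psi_\epsilon\|_{L^\infty(\G)}\lesssim \omega(\epsilon)^{-Q-[\alpha]}$, and combining with Step 2 gives
\[
\sup_{x\in K'}|v_\epsilon(x)|\lesssim \omega(\epsilon)^{-Q-k}.
\]

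\textbf{Step 4 (From $L^\infty$ to $L^p$).} By Step 1, $v_\epsilon$ is supported in the fixed compact $K'$, so for any $p\in[1,\infty]$
\[
\|v_\epsilon\|_{L^p(\G)}\leq |K'|^{1/p}\,\|v_\epsilon\|_{L^\infty(\G)}\lesssim \omega(\epsilon)^{-N},
\]
with $N:=Q+k$ independent of $p$. This is precisely the $L^p(\G)$-moderateness of $(v_\epsilon)_\epsilon$.

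The only genuinely non-routine point is Step 2, namely phrasing the order of $v\in\mathcal{E}'(\G)$ in terms of \emph{left-invariant} derivatives $X^{\alpha}$ rather than coordinate derivatives; this is standard on graded groups since the $X_j$'s span the tangent space at every point and commute with translations, so the seminorms defining $\mathcal{E}(\G)$ can equivalently be taken with respect to either family. Once this translation is in place, the rest is the scaling identity in Step 3 and the support bookkeeping in Step 1. The extension announced in Remark \ref{rem:e'}, that the same argument yields $\dot{L}^2_s(\G)$- and $H^s(\G)$-moderateness, follows by applying the same scheme to $\R^{s/\nu}\psi_\epsilon$ (or equivalently by noting that $\R^{s/\nu}$ is a homogeneous operator of degree $s$, contributing an extra factor $\omega(\epsilon)^{-s}$ to $N$).
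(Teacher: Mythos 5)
The paper itself gives no proof of Proposition \ref{prop.e'}; it simply recalls the statement from \cite{CTR21,CTR21b}, so there is nothing internal to compare against. Your argument is correct and is essentially the standard one used in those references: finite order of $v\in\mathcal{E}'(\G)$, the homogeneity scaling $\omega(\epsilon)^{-Q-[\alpha]}$ of derivatives of $\psi_\epsilon$, and uniform compact support of the net to pass from $L^\infty$ to all $L^p$. (The cited papers phrase the same idea via the structure theorem, writing $v$ as a finite sum of derivatives of compactly supported continuous functions and throwing the derivatives onto $\psi_\epsilon$ before applying Young's inequality; both routes give the same exponent $N$ up to bookkeeping.) One small point to tidy in Step 2: a left-invariant field $X$ acting in the $y$-variable on $y\mapsto\psi_\epsilon(y^{-1}x)$ produces (up to sign) the corresponding \emph{right}-invariant derivative of $\psi_\epsilon$ evaluated at $y^{-1}x$, not $X^\alpha\psi_\epsilon$ itself; since right-invariant vector fields on a graded group are homogeneous of the same degrees $\nu_j$, the scaling identity of Step 3 and hence the final bound are unaffected, but the statement as written should be corrected. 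Likewise the support of $v*\psi_\epsilon$ is contained in $K\cdot K_0$ rather than $K\cdot K_0^{-1}$; this is immaterial after enlarging $K_0$ to a symmetric set.
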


Describing the uniqueness of the very weak solutions amounts to ``measuring'' the changes on involved associated nets: \textit{negligibility} conditions for nets of functions/di-stributions read as follows:
\begin{definition}[Negligibility]
\label{defn.negl}
{Let $Y$ be a normed space of functions on $\G$.
Let $(f_\epsilon)_\epsilon $, $(\tilde{f}_\epsilon)_\epsilon$ be two nets. Then, the net $(f_\epsilon-\tilde{f}_\epsilon)_\epsilon$ is called $Y$-{\em negligible}, if the following condition is satisfied
    \begin{equation}\label{def.cond.negl}
    \|f_\epsilon-\tilde{f}_\epsilon\|_{Y}\lesssim \omega(\epsilon)^k\,,
    \end{equation}
    for all $k \in \mathbb{N}$, $\epsilon \in (0,1]$. In the case where $f_\epsilon=f_\epsilon(t,x)$ is a net depending also on $t \in [0,T]$, then the {\em negligibility condition} \eqref{def.cond.negl} can be regarded as
    \[
    \|f_\epsilon(t,\cdot)-\tilde{f}_\epsilon(t,\cdot)\|_{Y}\lesssim \omega(\epsilon)^k\,,\quad \forall k \in \mathbb{N}\,,
    \]
    uniformly in $t \in [0,T]$.} The constant in the inequality \eqref{def.cond.negl} can depend on $k$ but not on $\omega$.
\end{definition}

Definition \ref{def.vws.pos} and Definition \ref{def.vws.any} below introduce the notion of the very weak solution to the Cauchy problem \eqref{heat.eq}. The two definitions differ depending on the sign of the potential $V$ and shall be used accordingly. However, in principle, one could simply suggest to use only Definition  \ref{def.vws.any} where $V \in \mathbb{R}$, but in the case of a non-negative $V$ as in Definition \ref{def.vws.pos} the moderateness assumptions are more relaxed.

Let us clarify that in both definitions the approximating net $u_{0,\epsilon}$ shall be regarded as the regularisation of $u_0$ via convolution with a Fredrichs-mollifier $\psi$ as in \eqref{mol}. Regarding the potential $V$, the approximating net $V_\epsilon$ is an expansion of such idea, that encloses the cases where $V$ is singular. In that manner it is possible to consider $V=\delta$ or $V=\delta^2$. In the latter case we realise $V$ as an approximating family or in the Colombeau sense, and define $V_\epsilon$ as $V_\epsilon:=\psi_{\epsilon}^{2}$. We refer to Remark \ref{rem.negl} for a complementary discussion. 

Before stating the below definitions let us formulate the Cauchy problem \eqref{heat.eq} in its ``$\epsilon$-parametrised'' version:

\begin{equation}
    \label{reg.heat.eq}
    \begin{cases}
\partial_t u_\epsilon(t,x) +\mathcal{R}u_\epsilon(t,x)+V_\epsilon(x)u_\epsilon(t,x)=0\,,\quad (t,x)\in [0,T]\times \mathbb{G}\,,\\
u_\epsilon(0,x)=u_{0,\epsilon}(x), \, x \in \G\,,	
\end{cases}  
\end{equation}

\begin{definition}[Very weak solution, case $V\geq 0$]\label{def.vws.pos}
Let $V$ in \eqref{heat.eq} be non-negative. If there exists
\begin{itemize}
    \item a $L^\infty$-moderate, or (provided that $Q> \nu$) a $L^{\frac{Q}{\nu}}(\G)\cap L^{\frac{2Q}{\nu}}(\G)$-moderate approximation of $V$; 
    \item a $H^{\frac{\nu}{2}}(\G)$-moderate approximation of $u_0$;
    \item and the solution to the Cauchy problem \eqref{reg.heat.eq} $(u_\epsilon)_\epsilon$ is $C([0,T];H^{\frac{\nu}{2}}(\G))$-moderate,
\end{itemize}
then, $(u_\epsilon)_\epsilon$ is said to be \textit{a very weak solution} to the Cauchy problem \eqref{heat.eq} in the case of a non-negative potential $V$. 
\end{definition}

\begin{definition}[Very weak solution, case $V \in \mathbb{R}$]\label{def.vws.any}
Let $V$ be any real function. If there exists
\begin{itemize}
    \item a $L^\infty$-moderate approximation of $V$;
    \item a $H^{\frac{\nu}{2}}(\G)$-moderate approximation of $u_0$;
    \item and the solution to the Cauchy problem \eqref{reg.heat.eq} $(u_\epsilon)_\epsilon$ is $C([0,T];L^2(\G))$-moderate,
\end{itemize}
then, $(u_\epsilon)_\epsilon$ is said to be \textit{a very weak solution} to the Cauchy problem \eqref{heat.eq} in the case of a real potential $V$.
 \end{definition}

As mention earlier in Remark \ref{rem:e'}, proving the existence of a very weak solution means to prove that there exist suitable approximations of the initial data $u_0$ and of the coefficient $V$ such that the approximated problem \eqref{reg.heat.eq} has a suitable moderate solution. In the next two theorems we prove such existence under suitable assumptions depending on the sign of $V$.

\begin{theorem}
[Existence of the very weak solution, case $V\geq 0$]
\label{th.ex.heat.pos}
Let $V$ be non-negative. Let also $u_0 \in H^{\frac{\nu}{2}}(\G)\cup \mathcal{E}'(\G)$ and $V \in L^{\infty}(\G)\cup \mathcal{E}'(\G)$, or (provided that $Q > \nu$), $V \in L^{\frac{Q}{\nu}}(\G)\cap L^{\frac{2Q}{\nu}}(\G)$.
Then, the very weak solution to the Cauchy problem \eqref{heat.eq} exists.
\end{theorem}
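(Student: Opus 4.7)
The plan is to produce the very weak solution by regularizing the data, solving the resulting classical Cauchy problem via Proposition~\ref{prop.pos.a} or Proposition~\ref{prop.pos.b}, and then tracking how the energy estimates there translate into moderateness of the net $(u_\epsilon)_\epsilon$.

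First I would fix a Friedrichs-mollifier $\psi$ and set $u_{0,\epsilon}:=u_0*\psi_\epsilon$ and $V_\epsilon:=V*\psi_\epsilon$ (with $V_\epsilon:=\psi_\epsilon^2$ in the $V=\delta^2$ Colombeau-type case discussed in the paragraph preceding the definitions). I would then verify the moderateness hypotheses of Definition~\ref{def.vws.pos} case by case. If $u_0\in H^{\nu/2}(\G)$, then $\|u_{0,\epsilon}\|_{H^{\nu/2}(\G)}\lesssim\|u_0\|_{H^{\nu/2}(\G)}$ uniformly, giving $H^{\nu/2}(\G)$-moderateness with $N=0$. If instead $u_0\in\mathcal{E}'(\G)$, Proposition~\ref{prop.e'} gives $L^p(\G)$-moderateness, and, as stated in Remark~\ref{rem:e'}, the same argument, applied to $\R^{s/\nu}(u_0*\psi_\epsilon)=u_0*\R^{s/\nu}\psi_\epsilon$ together with the homogeneity of the mollifier scaling, upgrades this to $\dot L^2_s(\G)$- and hence $H^s(\G)$-moderateness for every $s>0$. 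The analogous check works for $V$: if $V\in L^\infty(\G)$ (respectively $L^{Q/\nu}\cap L^{2Q/\nu}$), Young's inequality gives moderateness in that norm for free, while for $V\in\mathcal{E}'(\G)$ Proposition~\ref{prop.e'} supplies $L^p$-moderateness for every $p\in[1,\infty]$, covering all the norms appearing in Definition~\ref{def.vws.pos}.

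Next I would solve, for each $\epsilon\in(0,1]$, the regularised problem \eqref{reg.heat.eq}. Since $V_\epsilon\in C^\infty\cap L^\infty(\G)$ is non-negative (mollification preserves positivity) and $u_{0,\epsilon}\in H^{\nu/2}(\G)$, Proposition~\ref{prop.pos.a} (or Proposition~\ref{prop.pos.b} in the $L^{Q/\nu}\cap L^{2Q/\nu}$ regime, since $Q>\nu$) provides a unique solution $u_\epsilon\in C^1([0,T];L^2(\G))\cap C([0,T];H^{\nu/2}(\G))$ satisfying
\[
\sup_{t\in[0,T]}\|u_\epsilon(t,\cdot)\|_{H^{\nu/2}(\G)}\lesssim\bigl(1+\|V_\epsilon\|_{L^\infty(\G)}\bigr)\|u_{0,\epsilon}\|_{H^{\nu/2}(\G)},
\]
or, in the second regime,
\[
\sup_{t\in[0,T]}\|u_\epsilon(t,\cdot)\|_{H^{\nu/2}(\G)}\lesssim\bigl(1+\|V_\epsilon\|_{L^{2Q/\nu}(\G)}\bigr)\bigl(1+\|V_\epsilon\|_{L^{Q/\nu}(\G)}\bigr)^{1/2}\|u_{0,\epsilon}\|_{H^{\nu/2}(\G)}.
\]
Inserting the moderateness bounds $\|V_\epsilon\|_{*}\lesssim\omega(\epsilon)^{-N_1}$ and $\|u_{0,\epsilon}\|_{H^{\nu/2}}\lesssim\omega(\epsilon)^{-N_2}$, the product on the right is bounded by $\omega(\epsilon)^{-N}$ for some $N\in\mathbb{N}$ depending on $N_1,N_2$ (and, in the second regime, summing the two exponents coming from the two $V_\epsilon$-factors). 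This is exactly $C([0,T];H^{\nu/2}(\G))$-moderateness of $(u_\epsilon)_\epsilon$, so $(u_\epsilon)_\epsilon$ fulfils Definition~\ref{def.vws.pos} and is the sought very weak solution.

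The only non-routine step is the distributional case for the data, where one must be sure that the a priori estimates from Propositions~\ref{prop.pos.a}–\ref{prop.pos.b} apply to $V_\epsilon$ in the relevant norm. Here the key point is that although $V$ is only a compactly supported distribution, the convolution $V*\psi_\epsilon$ lies in every $L^p(\G)$ for $\epsilon>0$ by Proposition~\ref{prop.e'}; so the hypotheses of the classical propositions are met for every fixed $\epsilon$, and what is lost as $\epsilon\to 0$ is only the quantitative rate, which is precisely what the notion of moderateness is designed to record.
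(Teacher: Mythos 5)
Your proposal is correct and follows essentially the same route as the paper: regularise $u_0$ and $V$ by a Friedrichs mollifier, invoke the a priori estimates \eqref{prop.pos.a.claim} and \eqref{prop.pos.b.claim} from Propositions \ref{prop.pos.a} and \ref{prop.pos.b} for the $\epsilon$-parametrised problem, and multiply the moderateness bounds of the data to obtain $C([0,T];H^{\frac{\nu}{2}}(\G))$-moderateness of $(u_\epsilon)_\epsilon$. Your write-up is somewhat more explicit than the paper's in verifying the moderateness of the regularised nets (via Proposition \ref{prop.e'} and Remark \ref{rem:e'}) and in noting that $V_\epsilon\geq 0$ is needed to apply the classical propositions, but these are elaborations of the same argument rather than a different one.
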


\begin{proof}
Let $u_0 \in H^{\frac{\nu}{2}}(\G)\cap \mathcal{E}'(\G)$. We first treat the case where $V\in L^{\infty}(\G)\cup \mathcal{E}'(\G)$. If $V_\epsilon, u_{o,\epsilon}$ are regularised (via convolution with a Friedrichs' mollifier) nets, then they satisfy the moderateness assumptions of Definition \ref{def.vws.pos}; i.e., we have 
\[
\|V_\epsilon\|_{L^\infty(\G)},\|u_{0,\epsilon}\|_{H^{\frac{\nu}{2}}(\G)} \lesssim \epsilon^{-n_0}\,,
\]
where we have chosen $\omega(\epsilon)=\epsilon$, for some $n_0\in \mathbb{N}$. Therefore, using the estimate \eqref{prop.pos.a.claim} we have 
\begin{eqnarray*}
\|u_\epsilon(t,\cdot)\|_{H^{\frac{\nu}{2}}(\G)}& \lesssim & (1+\|V_\epsilon\|_{L^\infty(\G)})\|u_{0,\epsilon}\|_{H^{\frac{\nu}{2}}(\G)}\\
& \lesssim & \epsilon^{-n_0} \times \epsilon^{-n_0}\\
& = & \epsilon^{-2n_0}\,,
\end{eqnarray*}
uniformly in $t$, and this shows that the net $u_{0,\epsilon}$ is $C([0,T];H^{\frac{\nu}{2}}(\G))$-moderate, as required. Next we treat the case where $V \in L^{\frac{Q}{\nu}}(\G) \cap L^{\frac{Q}{2\nu}}(\G)$. Consequently, for the net $V_\epsilon$ as before, the $L^{\frac{Q}{\nu}}(\G) \cap L^{\frac{Q}{2\nu}}(\G)$-moderateness assumptions for $\omega(\epsilon)=\epsilon$ and for some $n_1 \in \mathbb{N}$ are guaranteed. Thus, using the estimate \eqref{prop.pos.b.claim}, we can write
\begin{eqnarray*}
\|u(t,\cdot)\|_{H^{\frac{\nu}{2}}(\G)} & \lesssim & \|u_{0,\epsilon}\|_{H^{\frac{\nu}{2}}(\G)}  \left\{\left(1+\|V_\epsilon\|_{L^{\frac{2Q}{\nu }}(\G)}\right) \left(1+\|V_\epsilon\|_{L^{\frac{Q}{\nu }}(\G)}\right)^{\frac{1}{2}}\right\}\\
& \lesssim & \epsilon^{-n_0} \times \epsilon^{-n_1} \times \epsilon^{-\frac{n_1}{2}}\\
& = & \epsilon^{-n}\,,
\end{eqnarray*}
for some $n$, and the proof of Theorem \ref{th.ex.heat.pos} is complete.
\end{proof}

\begin{theorem}
[Existence of the very weak solution, case $V \in \mathbb{R}$]
\label{th.ex.heat.any}
Let $V$ be a real function. Let also $u_0 \in L^2(\G) \cup \mathcal{E}'(\G)$ and $V \in L^{\infty}(\G)\cup \mathcal{E}'(\G)$. Then, the very weak solution to the Cauchy problem \eqref{heat.eq} exists.
\end{theorem}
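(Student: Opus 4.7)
The plan is to follow the template used for Theorem \ref{th.ex.heat.pos}, substituting Proposition \ref{prop.neg.a} for Propositions \ref{prop.pos.a}--\ref{prop.pos.b}. Fix a Friedrichs mollifier $\psi$ with scaling $\omega(\epsilon)$ as in \eqref{mol}, form the regularised nets $u_{0,\epsilon} := u_0 * \psi_\epsilon$ and $V_\epsilon := V * \psi_\epsilon$, and feed them into the $\epsilon$-parametrised problem \eqref{reg.heat.eq}.

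First I would verify the moderateness requirements of Definition \ref{def.vws.any}. When $u_0 \in L^2(\G)$, Young's convolution inequality yields $\|u_{0,\epsilon}\|_{L^2(\G)} \leq \|u_0\|_{L^2(\G)}$ uniformly in $\epsilon$, a trivially $L^2$-moderate net (and $H^{\nu/2}$-moderate after transferring the action of $\sqrt{\R}$ onto $\psi_\epsilon$); when $u_0 \in \mathcal{E}'(\G)$, Proposition \ref{prop.e'} together with Remark \ref{rem:e'} yields the same conclusion. Similarly, $(V_\epsilon)_\epsilon$ is uniformly $L^\infty$-bounded when $V \in L^\infty(\G)$ and $L^\infty$-moderate when $V \in \mathcal{E}'(\G)$, again by Proposition \ref{prop.e'}.

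Next I would invoke Proposition \ref{prop.neg.a} for the smooth approximate problem \eqref{reg.heat.eq}, obtaining
\[
\|u_\epsilon(t,\cdot)\|_{L^2(\G)} \lesssim \exp\!\bigl(T\|V_\epsilon\|_{L^\infty(\G)}\bigr)\,\|u_{0,\epsilon}\|_{L^2(\G)}, \qquad t \in [0,T],
\]
and extract $C([0,T]; L^2(\G))$-moderateness of $(u_\epsilon)_\epsilon$ from it. In the case $V \in L^\infty(\G)$ this is immediate, since the exponential factor collapses to the fixed constant $\exp(T\|V\|_{L^\infty})$ and the moderateness of $(u_\epsilon)$ is inherited directly from that of $(u_{0,\epsilon})$, verifying Definition \ref{def.vws.any}.

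The main obstacle is the case $V \in \mathcal{E}'(\G)$, in which $\|V_\epsilon\|_{L^\infty}$ may blow up polynomially in $\omega(\epsilon)^{-1}$ and the factor $\exp(T\|V_\epsilon\|_{L^\infty})$ threatens to be super-polynomial. The plan to absorb this growth is to calibrate the mollifier scale $\omega(\epsilon)$ to the order of the distribution $V$, choosing $\omega$ slowly enough so that $\|V_\epsilon\|_{L^\infty}$ grows only logarithmically in $\epsilon^{-1}$; this replaces the exponential by a polynomial power of $\omega(\epsilon)^{-1}$, at which point the required $C([0,T]; L^2(\G))$-moderateness of $(u_\epsilon)_\epsilon$ follows and the very weak solution is produced.
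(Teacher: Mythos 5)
Your proposal is correct and follows essentially the same route as the paper: the paper likewise applies the estimate \eqref{prop.neg.a.claim} to the regularised problem and tames the factor $\exp(t\|V_\epsilon\|_{L^\infty})$ by choosing $\omega(\epsilon)=(\log\epsilon^{-n_0})^{-1/n_0}$ so that $\|V_\epsilon\|_{L^\infty}\lesssim\log(\epsilon^{-n_0})$, turning the exponential into the polynomial bound $\epsilon^{-Tn_0}$. Your separate, simpler treatment of the case $V\in L^\infty(\G)$ (where the exponential is a fixed constant) is a minor refinement the paper does not spell out, but the key device — the logarithmic calibration of the mollifier scale — is exactly the paper's.
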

\begin{proof}
By the assumptions of the theorem, the $L^{\infty}(\G)$-moderateness assumptions of the net $V_\epsilon$ are satisfied; i.e., for some $n_0 \in \mathbb{N}$ we have 
\[
\|V_\epsilon\|_{L^\infty (\G)} \lesssim \omega(\epsilon)^{-n_0}\,.
\]
Taking $\omega$ as $\omega(\epsilon):=(\log \epsilon^{-n_0})^{-\frac{1}{n_0}}$, where $n_0 \in \mathbb{N}$ is such that the moderateness assumptions can be expressed in the form
\[
\|V_\epsilon\|_{L^\infty (\G)} \lesssim \omega(\epsilon)^{-n_0}= \log (\epsilon^{-n_0})\,.
\]
For the net $u_{0,\epsilon}$ the $L^2$-moderateness can be simply regarded as
\[
\|u_{0,\epsilon}\|_{L^2(\G)}\lesssim \epsilon^{-n_1}\,, \quad n_1 \in \mathbb{N}\,.
\]
Using the estimate \eqref{prop.neg.a.claim} for the solution to \eqref{heat.eq} in the case of a real potential we deduce that  
\begin{eqnarray*}
  \|u_\epsilon(t,\cdot)\|_{L^2(\G)}  &\lesssim & \exp(t\|V_\epsilon\|_{L^\infty(\G)})\|u_{0,\epsilon}\|_{L^2(\G)}\\
  & \lesssim & \epsilon^{-tn_0} \times \epsilon^{-n_1}\\
  & \leq & \epsilon^{-Tn_0-n_1}\\
  & = & \epsilon^{-n}\,,
\end{eqnarray*}
for some $n \in \mathbb{N}$, and this completes the proof of Theorem \ref{th.ex.heat.any}.
\end{proof}

\begin{corollary}\label{cor.any.sob}
Under the assumptions of Theorem \ref{th.ex.heat.any} the very weak solution $u$ to the Cauchy problem \eqref{heat.eq} is also $H^{\frac{\nu}{2}}(\G)$-moderate. 
\end{corollary}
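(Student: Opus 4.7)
The plan is to supplement the $C([0,T];L^2(\G))$-moderateness of the very weak solution $(u_\epsilon)_\epsilon$ already delivered by Theorem \ref{th.ex.heat.any} with a moderate bound on $\|\sqrt{\R}u_\epsilon(t,\cdot)\|_{L^2(\G)}$, uniformly in $t \in [0,T]$. Since by \eqref{def.Hs-norm} one has $\|u_\epsilon(t,\cdot)\|_{H^{\nu/2}(\G)} = \|u_\epsilon(t,\cdot)\|_{\dot{L}^2_{\nu/2}(\G)} + \|u_\epsilon(t,\cdot)\|_{L^2(\G)}$, adding these two moderateness estimates is exactly what will yield the corollary.

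To produce the missing bound I would rerun the $\partial_t u$-multiplier energy argument from Proposition \ref{prop.pos.a} on the regularised equation \eqref{reg.heat.eq}. Multiplying by $\partial_t u_\epsilon$, integrating over $\G$ and taking real parts, and using that $V_\epsilon$ is real-valued together with the self-adjointness and positivity of $\R$, gives the identity
\begin{equation*}
\|\partial_t u_\epsilon(t,\cdot)\|^2_{L^2(\G)} + \tfrac12\partial_t \Bigl(\|\sqrt{\R}u_\epsilon(t,\cdot)\|^2_{L^2(\G)} + \langle V_\epsilon u_\epsilon(t,\cdot), u_\epsilon(t,\cdot)\rangle_{L^2(\G)}\Bigr) = 0.
\end{equation*}
Integrating from $0$ to $t$, discarding the non-negative first summand and estimating $|\langle V_\epsilon w, w\rangle_{L^2(\G)}| \leq \|V_\epsilon\|_{L^\infty(\G)}\|w\|^2_{L^2(\G)}$ on both endpoints leads to
\begin{equation*}
\|\sqrt{\R}u_\epsilon(t,\cdot)\|^2_{L^2(\G)} \leq \|\sqrt{\R}u_{0,\epsilon}\|^2_{L^2(\G)} + \|V_\epsilon\|_{L^\infty(\G)}\bigl(\|u_{0,\epsilon}\|^2_{L^2(\G)} + \|u_\epsilon(t,\cdot)\|^2_{L^2(\G)}\bigr),
\end{equation*}
which is an unsigned analogue of \eqref{Ru.1.est}.

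To finish, I would verify that every factor on the right-hand side is moderate with respect to the scale $\omega$ fixed in Theorem \ref{th.ex.heat.any}. The $L^\infty$-moderateness of $(V_\epsilon)_\epsilon$ is one of the standing hypotheses (and is furnished by Proposition \ref{prop.e'} when $V \in \mathcal{E}'(\G)$); the $L^2$-moderateness of the regularised initial data is immediate if $u_0 \in L^2(\G)$ and follows from Proposition \ref{prop.e'} otherwise, while its $\dot{L}^2_{\nu/2}(\G)$-moderateness follows either by Young's inequality together with the scaling $\|\sqrt{\R}\psi_\epsilon\|_{L^1(\G)}\lesssim \omega(\epsilon)^{-\nu/2}$, or directly by the extension of Proposition \ref{prop.e'} pointed out in Remark \ref{rem:e'}; finally, $\|u_\epsilon(t,\cdot)\|_{L^2(\G)}$ is moderate uniformly in $t$ by Theorem \ref{th.ex.heat.any}. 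Since sums and products of moderate nets stay moderate, combining these with the $L^2$-moderateness of $u_\epsilon$ gives the sought $C([0,T];H^{\nu/2}(\G))$-moderateness. No substantial obstacle is anticipated beyond routine scale bookkeeping: Theorem \ref{th.ex.heat.any} fixes a specific $\omega$ tailored to turn the exponential factor in \eqref{prop.neg.a.claim} into a polynomial in $\epsilon^{-1}$, and each bound above must be phrased consistently with respect to that same scale, which is straightforward.
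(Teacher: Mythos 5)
Your argument is correct, but it takes a genuinely different (and longer) route than the paper. The paper's proof is essentially a one-liner: it invokes Remark \ref{rem.any.sob}, i.e.\ the identity \eqref{prop2.any.sob} obtained by multiplying the equation by $u$ rather than by $\partial_t u$. Applied to the regularised problem \eqref{reg.heat.eq}, that identity bounds $\|\sqrt{\R}u_\epsilon(t,\cdot)\|^{2}_{L^2(\G)}$ by (essentially) $\|V_\epsilon\|_{L^\infty(\G)}\|u_\epsilon(t,\cdot)\|^{2}_{L^2(\G)}$, so the $\dot{L}^{2}_{\frac{\nu}{2}}(\G)$-moderateness follows from exactly the two quantities Theorem \ref{th.ex.heat.any} already controls --- the $L^\infty$-moderateness of $(V_\epsilon)_\epsilon$ and the uniform-in-$t$ $L^2$-moderateness of $(u_\epsilon)_\epsilon$ --- with no reference to the initial data at all. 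Your $\partial_t u_\epsilon$-multiplier identity is also correct (the step $\Re\langle V_\epsilon u_\epsilon,\partial_t u_\epsilon\rangle=\tfrac12\partial_t\langle V_\epsilon u_\epsilon,u_\epsilon\rangle$ does use that $V_\epsilon$ is real-valued, as you note), but it imports an extra ingredient: the $\dot{L}^{2}_{\frac{\nu}{2}}(\G)$-moderateness of $u_{0,\epsilon}$. That is harmless here --- it is required by Definition \ref{def.vws.any} anyway, and your scaling bound $\|\R^{1/2}\psi_\epsilon\|_{L^1(\G)}\lesssim\omega(\epsilon)^{-\nu/2}$ (or the extension of Proposition \ref{prop.e'} noted in Remark \ref{rem:e'}) supplies it for mollified data --- but it makes the corollary depend on the regularity of the initial data in a way the paper's argument does not. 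What the paper's route buys is brevity and minimal hypotheses; what yours buys is a self-contained quantitative estimate that does not rely on the (somewhat loosely stated) inequality of Remark \ref{rem.any.sob}. Your closing caveat about phrasing every bound on the single scale $\omega(\epsilon)=(\log\epsilon^{-n_0})^{-\frac{1}{n_0}}$ fixed in Theorem \ref{th.ex.heat.any} is well taken and applies equally to either route.
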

\begin{proof}
The proof is an immediate consequence of Theorem \ref{th.ex.heat.any} and takes into account Remark \ref{rem.any.sob}.
\end{proof}
\begin{remark}
It is evident that, in the case where $V \geq 0$ and $V_\epsilon$ is a $L^{\infty}(\G)$-moderate net, one can ease the assumptions on the moderateness of $u_{0,\epsilon}$ as in Definition \ref{def.vws.any} where $V \in \mathbb{R}$ provided the function $\omega(\epsilon)$ as in the moderateness of $V_\epsilon$ is sharp enough; i.e., we require 
\[
\omega(\epsilon)^{-n_1}\leq \log \epsilon^{-n_0}\,,
\]
for some $n_0,n_1 \in \mathbb{N}$. In this case the $C([0,T];L^2(\G))$-moderateness of $u_\epsilon$ shall be proved using the estimate \eqref{prop.neg.a.claim} that applies to the case of a real potential $V$.
\end{remark}


As mentioned in the beginning of this section, proving the well-posedness to the Cauchy problem \eqref{heat.eq} in the very weak sense, amounts to proving that a very weak solution exists and it is unique modulo negligible nets. Here negligibility has to be understood under appropriate choices of norms.

Before giving the formal definition of the uniqueness of the very weak solution in our setting, see Definiton \ref{defn.uniq.heat.pos}, case $V \geq 0$, and Definition \ref{defn/uniq.any}, case $V \in \mathbb{R}$, below, let us state the ``$\epsilon$-paremetrised problems to be considered:  
 \begin{equation}\label{heat.reg.tild}
		\begin{cases}
			\partial_{t}u_\epsilon(t,x) +\mathcal{R} u_\epsilon (t,x)+V_\epsilon(x)u_\epsilon(t,x)=0\,,\quad (t,x)\in [0,T]\times \mathbb{G},\\
			u_\epsilon(0,x)=u_{0,\epsilon}(x),\; x \in \G\,,	
		\end{cases}       
	\end{equation}
	and 
	\begin{equation}\label{heat.ret.notil}
		\begin{cases}
			\partial_{t}\tilde{u}_\epsilon(t,x) +\mathcal{R}\tilde{u}_\epsilon (t,x)+\tilde{V}_\epsilon(x)\tilde{u}_\epsilon(t,x)=0\,,\quad (t,x)\in [0,T]\times \mathbb{G},\\
			\tilde{u}_\epsilon(0,x)=\tilde{u}_{0,\epsilon}(x),\; x \in \G\,.	
		\end{cases}       
	\end{equation}
\begin{definition}[Uniqueness of the very weak solution, case $V\geq 0$]\label{defn.uniq.heat.pos}
Let $V$ be non-negative. Let also $X$ and $Y$ be normed spaces of functions/distributions on $\G$.
We say that the Cauchy problem \eqref{heat.eq} has an \textit{$(X,Y)$-unique very weak solution}, if 
\begin{itemize}
    \item for all $X$-moderate nets
	$V_\epsilon, \tilde{V}_\epsilon,$ such that $(V_\epsilon-\tilde{V}_\epsilon)_\epsilon$ is $Y$-negligible; and
	\item for all $H^{\frac{ \nu}{2}}(\G)$-moderate regularisations $u_{0,\epsilon}$, $\tilde{u}_{0,\epsilon}$ such that $(u_{0,\epsilon}-\tilde{u}_{0,\epsilon})_\epsilon$ is $L^2(\G)$-negligible,
\end{itemize}
 the net $(u_\epsilon-\tilde{u}_\epsilon)$ is $L^2(\G)$-negligible, where $(u_\epsilon)_\epsilon$ and $(\tilde{u}_\epsilon)_\epsilon$ are the families of solutions corresponding to the $\epsilon$-parametrised problems \eqref{heat.reg.tild} and \eqref{heat.ret.notil}, respectively.
\end{definition}

\begin{definition}\label{defn/uniq.any}
[Uniqueness of the very weak solution, case $V \in \mathbb{R}$]
In the case of a real potential $V$ the definition is similar to Definition \ref{defn.uniq.heat.pos} but the moderateness assumption regarding the nets $u_{o,\epsilon}, \tilde{u}_{0,\epsilon}$ can be relaxed to being $L^2(\G)$-moderate.
\end{definition}

\begin{remark}\label{rem.negl}
In the work \cite{ARST21c}, Definitions 4,5 and 8 on the uniqueness of the very weak solution to the heat equation in the Euclidean setting considered there, are a less meticulous version of Definitions \ref{defn.uniq.heat.pos} and \ref{defn/uniq.any} that we give here. In particular, the previous and the current definitions differ with respect to the assumptions on the asymptotic behaviour of the nets $V_\epsilon$ and $\tilde{V}_\epsilon$: here we enlarge the requirement that the aforementioned nets approximate $V$ as we simply assume that the difference of nets $V_\epsilon-\tilde{V}_\epsilon$ is $L^\infty$-negligible. In this manner the initial idea to define uniqueness in terms of the stability  of the very weak solution under negligible changes on the coefficient $V$ remains, but we drop the previous requirement for the nets $V_\epsilon$ and $\tilde{V}_\epsilon$ to be regularisations of $V$. 
\end{remark}
Let us highlight Remark \ref{rem.negl} by the the next example where we provide cases of nets $V_\epsilon$ and $\tilde{V}_\epsilon$ that fall into the assumptions of the definitions of the uniqueness here, and are not covered but those in \cite{ARST21c}: 
\begin{example}
\begin{itemize}
    \item Let $V$ be any function/distribution and the nets $V_\epsilon$ and $\tilde{V}_\epsilon$ to be defined as:
    \begin{equation}\label{mex}
        \begin{cases}
        V_\epsilon:=V \ast \psi_\epsilon\,, \text{where}\,  \psi_\epsilon:=\omega(\epsilon)^{-Q} \psi \circ D_{\omega(\epsilon)^{-1}}\nonumber\,,\\
        \tilde{V}_\epsilon=V_\epsilon+e^{-1/\epsilon}\,.
        \end{cases}
    \end{equation}
    \item For $V=\delta^2$ and nets $V_\epsilon$ and $\tilde{V}_\epsilon$ to be given by: 
    \begin{equation*}
        \begin{cases}
        V_\epsilon=\psi_{\epsilon}^{2}\\
         \tilde{V}_\epsilon=V_\epsilon+e^{-1/\epsilon}\,.
        \end{cases}
    \end{equation*}
\end{itemize}
The above considerations give rise to nets $V_\epsilon-\tilde{V}_\epsilon$ that are $L^\infty$-negligible, and therefore, satisfy the assumptions described in our definitions of uniqueness of the very weak solution to the Cauchy problem \eqref{heat.eq}. 
\end{example}

In the remaining section we show the well-posedeness to the Cauchy problem \eqref{heat.eq} in any feasible $(X,Y)$-very weak sense. 

The following technical lemma is useful for our purposes in the case of a non-negative $V$.
\begin{lemma}
\label{techn.lem}
Let $V$ be non-negative. Assume also that $u_0 \in L^2(\G)$ and $V \in L^{\infty}(\G)$, or $V \in L^{\frac{Q}{\nu}}(\G)\cap L^{\frac{2Q}{\nu}}(\G)$, provided that $Q> \nu$. Then, for the unique solution $u$ to the Cauchy problem \eqref{heat.eq} we have the energy estimate
\begin{equation}\label{en.est}
\|u(t,\cdot)\|_{L^2(\G)}\leq \|u_0\|_{L^2(\G)}\,,
\end{equation}
for all $t \in [0,T]$.
\end{lemma}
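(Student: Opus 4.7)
The plan is to run the standard energy identity for the Cauchy problem \eqref{heat.eq}, but this time pair the equation against $u$ itself rather than against $u_t$ as in Proposition \ref{prop.pos.a}. Multiplying \eqref{heat.eq} by $u(t,\cdot)$, integrating over $\G$ against the Haar measure, and taking the real part gives
\[
\Re\bigl(\langle u_t(t,\cdot),u(t,\cdot)\rangle_{L^2(\G)}+\langle \R u(t,\cdot),u(t,\cdot)\rangle_{L^2(\G)}+\langle V u(t,\cdot),u(t,\cdot)\rangle_{L^2(\G)}\bigr)=0.
\]
Since $\R$ is a positive self-adjoint Rockland operator, functional calculus yields $\langle \R u,u\rangle_{L^2(\G)}=\|\sqrt{\R}\,u\|_{L^2(\G)}^{2}\geq 0$, and because $V\geq 0$ (interpreted as a multiplication operator on $L^2(\G)$) we likewise have $\langle Vu,u\rangle_{L^2(\G)}=\|\sqrt{V}\,u\|_{L^2(\G)}^{2}\geq 0$. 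The first term collapses to $\tfrac{1}{2}\partial_t\|u(t,\cdot)\|_{L^2(\G)}^{2}$.

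Collecting these observations, the identity above rewrites as
\[
\tfrac{1}{2}\partial_t\|u(t,\cdot)\|_{L^2(\G)}^{2}+\|\sqrt{\R}\,u(t,\cdot)\|_{L^2(\G)}^{2}+\|\sqrt{V}\,u(t,\cdot)\|_{L^2(\G)}^{2}=0,
\]
so that $\partial_t\|u(t,\cdot)\|_{L^2(\G)}^{2}\leq 0$ for every $t\in[0,T]$. Integrating this differential inequality in $t$ from $0$ to $t$ gives $\|u(t,\cdot)\|_{L^2(\G)}^{2}\leq \|u_0\|_{L^2(\G)}^{2}$, which is exactly \eqref{en.est}.

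The existence and uniqueness of the classical solution $u$ underlying this computation is provided by Proposition \ref{prop.pos.a} in the case $V\in L^\infty(\G)$ and by Proposition \ref{prop.pos.b} in the case $V\in L^{Q/\nu}(\G)\cap L^{2Q/\nu}(\G)$ with $Q>\nu$; in both settings $u\in C^{1}([0,T];L^2(\G))\cap C([0,T];H^{\nu/2}(\G))$, which is enough regularity to legitimately differentiate $\|u(t,\cdot)\|_{L^2(\G)}^{2}$ in $t$ and to justify the pairings above. The only mildly delicate point is giving meaning to $\langle Vu,u\rangle_{L^2(\G)}$ when $V$ merely lies in $L^{Q/\nu}(\G)\cap L^{2Q/\nu}(\G)$; but since $u(t,\cdot)\in H^{\nu/2}(\G)$, the Sobolev embedding \eqref{inclusions} (with $b=\nu/2$, $a=0$, $q_0=2Q/(Q-\nu)$) places $u(t,\cdot)$ in $L^{2Q/(Q-\nu)}(\G)$, and Hölder's inequality with exponent $Q/\nu$ then gives $Vu(t,\cdot)u(t,\cdot)\in L^1(\G)$. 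No further obstacle is expected; the estimate \eqref{en.est} follows.
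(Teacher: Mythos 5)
Your proof is correct and follows essentially the same route as the paper: multiply the equation by $u$, integrate over $\G$, take real parts, and use the nonnegativity of $\langle \R u,u\rangle$ and $\langle Vu,u\rangle$ to conclude $\partial_t\|u(t,\cdot)\|_{L^2(\G)}^2\leq 0$. The extra remarks you add on justifying the pairing $\langle Vu,u\rangle$ via the Sobolev embedding and H\"older's inequality in the $L^{Q/\nu}\cap L^{2Q/\nu}$ case are a welcome refinement that the paper leaves implicit.
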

\begin{proof}
If we multiply equation \eqref{heat.eq} by $u$, and integrate over $\G$ we derive
\[
	\Re(\langle u_t(t,\cdot),u(t,\cdot)\rangle_{L^2(\mathbb{G})}+\langle \mathcal{R}u(t,\cdot),u(t,\cdot)\rangle_{L^2(\G)}+\langle V(\cdot)u(t,\cdot),u(t,\cdot) \rangle_{L^2(\G)})=0\,,
\]
where the last inequality can be rewritten as 
\[
\frac12 \partial_t \|u(t,\cdot)\|\L = -\|\sqrt{\R}u(t,\cdot)\|\L-\|\sqrt{V}(\cdot)u(t,\cdot)\|\L \leq 0\,,
\]
and the claim \eqref{en.est} follows.
\end{proof}

\begin{theorem}[Uniqueness of the very weak solution, case I: $V \geq 0$]
\label{thm.un.pos.1}
Let $V$ be non-negative. Let also $u_0 \in H^{\frac{\nu}{2}}(\G)\cup \mathcal{E}'(\G)$. The following statements hold true:
\begin{itemize}
    \item If $V \in L^{\infty}(\G)\cup \mathcal{E}'(\G)$, then the very weak solution to the Cauchy problem \eqref{heat.eq} is $(L^{\infty}(\G),L^\infty (\G))$-unique;
    \item if $Q > \nu$ and $V \in L^{\infty}(\G)\cup \mathcal{E}'(\G)$, then the very weak solution to the Cauchy problem \eqref{heat.eq} is $(L^{\frac{Q}{\nu}}(\G)\cap L^{\frac{2Q}{ \nu}}(\G), L^\infty (\G))$-unique.
\end{itemize}
\end{theorem}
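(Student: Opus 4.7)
The plan is to form the difference net $U_\epsilon := u_\epsilon - \tilde u_\epsilon$ and show that $\sup_{t\in[0,T]}\|U_\epsilon(t,\cdot)\|_{L^2(\G)}$ decays faster than any power of $\omega(\epsilon)$. Subtracting \eqref{heat.ret.notil} from \eqref{heat.reg.tild}, $U_\epsilon$ solves the inhomogeneous problem
\begin{equation*}
\partial_t U_\epsilon + \R U_\epsilon + V_\epsilon U_\epsilon = -(V_\epsilon - \tilde V_\epsilon)\tilde u_\epsilon, \qquad U_\epsilon(0,\cdot) = u_{0,\epsilon}-\tilde u_{0,\epsilon}.
\end{equation*}
Following the pattern of Lemma \ref{techn.lem}, I would take the $L^2(\G)$-pairing of this equation with $U_\epsilon$, take real parts, and use $V_\epsilon \geq 0$ to obtain the energy identity
\begin{equation*}
\tfrac12\partial_t\|U_\epsilon\|^2_{L^2(\G)} + \|\sqrt{\R}U_\epsilon\|^2_{L^2(\G)} + \|\sqrt{V_\epsilon}U_\epsilon\|^2_{L^2(\G)} = -\Re\langle (V_\epsilon-\tilde V_\epsilon)\tilde u_\epsilon, U_\epsilon\rangle_{L^2(\G)}.
\end{equation*}

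Next I would discard the two non-negative dissipative terms on the left (here the positivity $V_\epsilon\geq 0$ is used crucially) and estimate the right-hand side by Cauchy--Schwarz, the $L^\infty\times L^2$ Hölder inequality, and Young's inequality, producing
\begin{equation*}
\partial_t\|U_\epsilon(t,\cdot)\|^2_{L^2(\G)} \leq \|V_\epsilon-\tilde V_\epsilon\|^2_{L^\infty(\G)}\,\|\tilde u_\epsilon(t,\cdot)\|^2_{L^2(\G)} + \|U_\epsilon(t,\cdot)\|^2_{L^2(\G)}.
\end{equation*}
Gr\"onwall's inequality with initial value $U_\epsilon(0,\cdot)=u_{0,\epsilon}-\tilde u_{0,\epsilon}$ then gives, uniformly in $t\in[0,T]$,
\begin{equation*}
\|U_\epsilon(t,\cdot)\|^2_{L^2(\G)} \lesssim \|u_{0,\epsilon}-\tilde u_{0,\epsilon}\|^2_{L^2(\G)} + \|V_\epsilon-\tilde V_\epsilon\|^2_{L^\infty(\G)} \sup_{s\in[0,T]}\|\tilde u_\epsilon(s,\cdot)\|^2_{L^2(\G)}.
\end{equation*}

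To close the argument I would invoke the hypotheses packaged into Definition \ref{defn.uniq.heat.pos}: the initial difference is $L^2(\G)$-negligible, the coefficient difference is $L^\infty(\G)$-negligible, and $\tilde u_\epsilon$ is $C([0,T];H^{\nu/2}(\G))$-moderate, hence $L^2(\G)$-moderate with some loss $\omega(\epsilon)^{-N}$, by Theorem \ref{th.ex.heat.pos}, which applies under either of the two bullet points. For any target $k\in\mathbb N$, taking the negligibility orders large enough to absorb $\omega(\epsilon)^{-2N}$ delivers $\|U_\epsilon(t,\cdot)\|_{L^2(\G)}\lesssim \omega(\epsilon)^k$, i.e.\ $L^2(\G)$-negligibility of $(U_\epsilon)_\epsilon$, and this one computation proves both parts of the theorem. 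The role of $X$ (either $L^\infty(\G)$ or $L^{Q/\nu}(\G)\cap L^{2Q/\nu}(\G)$) enters only through Theorem \ref{th.ex.heat.pos} to guarantee existence of the approximate solutions; the stability estimate itself needs nothing more than the $L^\infty$-negligibility of $V_\epsilon-\tilde V_\epsilon$ and the $L^2$-moderateness of $\tilde u_\epsilon$. The main subtle point, and the reason this proof is clean, is that the sign condition lets us drop $\|\sqrt{V_\epsilon}U_\epsilon\|^2_{L^2(\G)}$ without a trace; in the sign-changing setting of Definition \ref{defn/uniq.any} that term would force an $\exp(T\|V_\epsilon\|_{L^\infty})$ amplification as in Proposition \ref{prop.neg.a}, and the $L^\infty$-moderateness of $V_\epsilon$ would have to be logarithmic, exactly as in the proof of Theorem \ref{th.ex.heat.any}.
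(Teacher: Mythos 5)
Your argument is correct and proves both bullet points. It differs from the paper's proof in one technical respect: the paper does not run a Gr\"onwall argument on the difference equation. Instead it writes $U_\epsilon$ via Duhamel's principle as the solution $W_\epsilon$ of the homogeneous problem with data $u_{0,\epsilon}-\tilde u_{0,\epsilon}$ plus $\int_0^t \tilde U_\epsilon(t-\sigma,\cdot;\sigma)\,d\sigma$, where each $\tilde U_\epsilon(\cdot,\cdot;\sigma)$ solves the homogeneous problem launched at time $\sigma$ from the source $f_\epsilon(\sigma,\cdot)=(\tilde V_\epsilon-V_\epsilon)\tilde u_\epsilon(\sigma,\cdot)$; it then applies the $L^2$-contraction bound of Lemma \ref{techn.lem} (which is where $V_\epsilon\geq 0$ enters) to each piece, and Minkowski's integral inequality, arriving at $\|U_\epsilon(t,\cdot)\|_{L^2}\leq \|u_{0,\epsilon}-\tilde u_{0,\epsilon}\|_{L^2}+\int_0^T\|\tilde V_\epsilon-V_\epsilon\|_{L^\infty}\|\tilde u_\epsilon(\sigma,\cdot)\|_{L^2}\,d\sigma$. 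Your direct energy identity for $U_\epsilon$ exploits the positivity of $V_\epsilon$ at exactly the same spot (dropping $\|\sqrt{V_\epsilon}U_\epsilon\|_{L^2}^2$), and the two routes land on the same final bound up to a harmless $e^T$ factor from Gr\"onwall; the conclusion via negligibility of the data and coefficient differences against the moderateness of $\tilde u_\epsilon$ is identical. If anything, your route is slightly more self-contained (it does not need Duhamel or the separate Lemma \ref{techn.lem}), and you could even weaken your last ingredient: the $L^2$-moderateness of $\tilde u_\epsilon$ follows directly from $\|\tilde u_\epsilon(t,\cdot)\|_{L^2}\leq\|\tilde u_{0,\epsilon}\|_{L^2}$, so the $H^{\nu/2}$-moderateness from Theorem \ref{th.ex.heat.pos} is not strictly needed here. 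Your closing remark correctly identifies why the same computation cannot be reused verbatim for sign-changing $V$, which is precisely why the paper treats that case separately in Theorem \ref{thm.un.any} with a logarithmic scale $\omega(\epsilon)$.
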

\begin{proof}
We denote by $u_\epsilon$ and $\tilde{u}_\epsilon$ the families of solutions to the Cauchy problems \eqref{heat.ret.notil} and \eqref{heat.reg.tild}, respectively. Setting $U_\epsilon$ to be the difference of these nets $U_\epsilon:=u_\epsilon(t,\cdot)-\tilde{u}_\epsilon(t,\cdot)$, then $U_\epsilon$ solves
\begin{eqnarray}
\label{forU1}
\begin{cases}
\partial_t U_\epsilon(t,x)+\R U_\epsilon(t,x)+V_\epsilon(x)U_\epsilon(t,x)=f_\epsilon(t,x)\,,\quad (t,x)\in [0,T] \times \G\,,\\
U_\epsilon(0,x)=(u_{0,\epsilon}-\tilde{u}_{0,\epsilon})(x)\,,\quad x \in \G\,,
\end{cases}
\end{eqnarray}
where we set $f_\epsilon(t,x):=(\tilde{V}_\epsilon(x)-V_\epsilon(x))\tilde{u}_\epsilon(t,x)$ for the mass term to the inhomogeneous Cauchy problem \eqref{forU1}.

We aim to express the solution to \eqref{forU1} in terms of the solutions to the corresponding homogeneous problems: fix $\sigma \in [0,T]$, and let $W_\epsilon(t,x)$ and $\tilde{U}_\epsilon(t,x;\sigma)$ being the solutions to following the Cauchy problems
\begin{equation*}
\begin{cases*}
	\partial_{t}W_\epsilon(t, x)+\R W_\epsilon(t, x)+V(x)_\epsilon(x) W_\epsilon(t, x)=0\,,\quad\text{in}\, [0,T] \times \G,\\
	W_\epsilon(t, x)=(u_{0,\epsilon}-\tilde{u}_{0,\epsilon})(x)\,\quad \text{on}\,\{t=0\}\times \G\,,
\end{cases*}
\end{equation*}
and
\begin{equation*}\label{hom.Ve}
\begin{cases*}
	\partial_{t}\tilde{U}_\epsilon(t, x;\sigma)+\R \tilde{U}_\epsilon(t, x;\sigma)+V_\epsilon(x) \tilde{U}_\epsilon(t, x;\sigma)=0\,,\quad\text{in}\, (\sigma,T] \times \G,\\
	\tilde{U}_\epsilon(t, x;\sigma)=f_\epsilon(\sigma,x)\,\quad \text{on}\,\{t=\sigma\}\times \G\,.
\end{cases*}
\end{equation*}
Thus, with the above considerations, we can write as an application of Duhamel's principle:
\begin{equation*}\label{duh.prin}
U_\epsilon(t,x)=W_\epsilon(t,x)+\int_{0}^{t}\tilde{U}_\epsilon(t-\sigma,x;\sigma)\,d\sigma\,.
\end{equation*}
Passing to the $L^2$-norm of the latter expression we get
\[
	\|U_\epsilon(t,\cdot)\|_{L^2(\G)} \leq \|W_\epsilon(t,\cdot)\|_{L^2(\G)}+ \left\| \int_{0}^{t}\tilde{U}_\epsilon(t-\sigma,\cdot;\sigma)\,d\sigma \right \|_{L^2(\G)}\,.
	\]
	An application of Minkowski's integral inequality, combined with the energy upper bound \eqref{en.est} for both solutions $W_\epsilon$ and $\tilde{U}_\epsilon$ to the Cauchy problems as above, allow to estimate $\|U_\epsilon(t,\cdot)\|_{L^2}$  further as given below:
	\begin{eqnarray}\label{mink}
\|U_\epsilon(t,\cdot)\|_{L^2(\G)} & \leq & \|W_\epsilon(t,\cdot)\|_{L^2(\G)}+\int_{0}^{T}\|\tilde{U}_\epsilon(t-\sigma,\cdot;\sigma)\|_{L^2(\G)}\,d\sigma\nonumber\\
& \leq & \|u_{0,\epsilon}-\tilde{u}_{0,\epsilon}\|_{L^2(\G)}+\int_{0}^{T}\|f_\epsilon(\sigma,\cdot)\|_{L^2(\G)}\,d\sigma\,.
\end{eqnarray}
Since
\[
\|f_\epsilon(\sigma,\cdot)\|_{L^2(\G)}\leq \|\tilde{V}_\epsilon-V_\epsilon\|_{L^\infty(\G)}\|\tilde{u}_\epsilon(\sigma,\cdot)\|_{L^2(\G)}\,,
\]
using \eqref{mink} we obtain 
\[
\|\tilde{u}_\epsilon(\sigma,\cdot)\|_{L^2(\G)} \leq \|u_{0,\epsilon}-\tilde{u}_{0,\epsilon}\|_{L^2(\G)}\int_{0}^{T} \|\tilde{u}_\epsilon(\sigma,\cdot)\|_{L^2(\G)}\,d\sigma\,.
\]
Finally, taking into account the negligibility of the nets $u_{0,\epsilon}-\tilde{u}_{0,\epsilon}$ and $\tilde{V}_\epsilon-V_\epsilon$ we get
\[
\|U_\epsilon(t,\cdot)\|_{L^2(\G)} \lesssim \omega_1(\epsilon)^{N}+\omega_2(\epsilon)^{\tilde{N}}  \int_{0}^{T}\omega_3(\epsilon)^{-N_1}\,d\sigma  \,,
\]
for some $N_1 \in \mathbb{N}$, and for all $N,\tilde{N} \in \mathbb{N}$, since $\tilde{u}_\epsilon$ is $H^{\frac{\nu}{2}}(\G)$-moderate, and so also $L^2(\G)$-moderate. The last estimate holds true uniformly in $t$, and this completes the proof of Theorem \ref{thm.un.pos.1}.
\end{proof}

Still in the case where $V \geq 0$, alternative to Theorem \ref{thm.un.pos.1} conditions on the negligibility of the net $V_\epsilon-\tilde{V}_\epsilon$ that guarantee the very weak well-posedness of \eqref{heat.eq} are given in the next theorem.
\begin{theorem}
\label{thm.un.pos2}
Let $V$ be non-negative. Let also $u_0 \in H^{\frac{\nu}{2}}(\G)\cup \mathcal{E}'(\G)$. The following statements hold true:
\begin{itemize}
    \item If $V \in L^{\infty}(\G)\cup \mathcal{E}'(\G)$, then the very weak solution to the Cauchy problem \eqref{heat.eq} is $(L^\infty (\G),L^{\frac{2Q}{\nu}}(\G))$-unique;
    \item if $Q > \nu$ and $V \in L^{\frac{Q}{\nu}}(\G)\cap L^{\frac{Q}{2\nu}}(\G)$, then the very weak solution to the Cauchy problem \eqref{heat.eq} is $(L^{\frac{Q}{\nu}}(\G)\cap L^{\frac{2Q}{ \nu}}(\G), L^{\frac{2Q}{\nu}}(\G))$-unique.
\end{itemize}
\end{theorem}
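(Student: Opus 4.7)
The plan is to mirror the Duhamel decomposition that drives the proof of Theorem~\ref{thm.un.pos.1}, and then to replace the crude $L^{\infty}$-Hölder step that bounds the forcing term by a Hölder--Sobolev step that only costs an $L^{2Q/\nu}$-norm on $V_\epsilon-\tilde V_\epsilon$. Concretely, set $U_\epsilon:=u_\epsilon-\tilde u_\epsilon$ and observe that $U_\epsilon$ solves the inhomogeneous Cauchy problem \eqref{forU1} with forcing $f_\epsilon(t,x):=(\tilde V_\epsilon(x)-V_\epsilon(x))\tilde u_\epsilon(t,x)$ and initial datum $u_{0,\epsilon}-\tilde u_{0,\epsilon}$. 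Using the Duhamel representation of $U_\epsilon$ together with Minkowski's integral inequality and the nonnegative-potential energy bound \eqref{en.est} from Lemma~\ref{techn.lem} (valid since $V_\epsilon\geq 0$), one arrives, exactly as in \eqref{mink}, at
\begin{equation*}
\|U_\epsilon(t,\cdot)\|_{L^2(\G)}\leq\|u_{0,\epsilon}-\tilde u_{0,\epsilon}\|_{L^2(\G)}+\int_{0}^{T}\|f_\epsilon(\sigma,\cdot)\|_{L^2(\G)}\,d\sigma,
\end{equation*}
uniformly in $t\in[0,T]$.

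The heart of the argument is the new estimate on $\|f_\epsilon(\sigma,\cdot)\|_{L^2(\G)}$. Invoking $Q>\nu$ (explicit in the second bullet and needed for the Hölder pair below in the first), I would apply Hölder's inequality with conjugate exponents $2Q/\nu$ and $2Q/(Q-\nu)$ to get
\begin{equation*}
\|f_\epsilon(\sigma,\cdot)\|_{L^2(\G)}\leq\|V_\epsilon-\tilde V_\epsilon\|_{L^{\frac{2Q}{\nu}}(\G)}\,\|\tilde u_\epsilon(\sigma,\cdot)\|_{L^{\frac{2Q}{Q-\nu}}(\G)},
\end{equation*}
and then use the Sobolev embedding \eqref{inclusions} with $\tilde q_0=2$, $q_0=\frac{2Q}{Q-\nu}$, $b=\nu/2$, $a=0$ (for which the scaling relation $b-a=Q(\frac{1}{\tilde q_0}-\frac{1}{q_0})$ is satisfied) to dominate
\begin{equation*}
\|\tilde u_\epsilon(\sigma,\cdot)\|_{L^{\frac{2Q}{Q-\nu}}(\G)}\lesssim\|\tilde u_\epsilon(\sigma,\cdot)\|_{\dot L^{2}_{\nu/2}(\G)}\leq\|\tilde u_\epsilon(\sigma,\cdot)\|_{H^{\frac{\nu}{2}}(\G)}.
\end{equation*}
This is precisely the point where the regularisation norm on $V_\epsilon-\tilde V_\epsilon$ drops from $L^{\infty}$ (as in Theorem~\ref{thm.un.pos.1}) to $L^{2Q/\nu}$.

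To close the argument I would combine three ingredients: the $L^{2}(\G)$-negligibility of $u_{0,\epsilon}-\tilde u_{0,\epsilon}$, the $L^{2Q/\nu}(\G)$-negligibility of $V_\epsilon-\tilde V_\epsilon$, and the $C([0,T];H^{\nu/2}(\G))$-moderateness of $\tilde u_\epsilon$. The latter is provided by Proposition~\ref{prop.pos.a} under the first bullet (where $V_\epsilon$ is $L^{\infty}$-moderate) and by Proposition~\ref{prop.pos.b} under the second bullet (where $V_\epsilon$ is $L^{Q/\nu}\cap L^{2Q/\nu}$-moderate), so in both cases $\|\tilde u_\epsilon(\sigma,\cdot)\|_{H^{\nu/2}(\G)}\lesssim\omega(\epsilon)^{-N_1}$ for some $N_1\in\mathbb{N}$. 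Plugging back into the Duhamel estimate yields
\begin{equation*}
\|U_\epsilon(t,\cdot)\|_{L^2(\G)}\lesssim\omega(\epsilon)^{N}+T\,\omega(\epsilon)^{\tilde N-N_1},
\end{equation*}
for every $N,\tilde N\in\mathbb{N}$, uniformly in $t\in[0,T]$; choosing $\tilde N$ arbitrarily large gives the required $L^2$-negligibility and hence $(X,L^{2Q/\nu})$-uniqueness in the sense of Definition~\ref{defn.uniq.heat.pos}.

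The main obstacle is just the bookkeeping of the Hölder--Sobolev pair: one must check that $Q>\nu$ is consistent with the first bullet (otherwise the embedding into $L^{2Q/(Q-\nu)}$ breaks), and that the Sobolev target exponent matches the Hölder conjugate of $2Q/\nu$, which it does by construction. Once this scaling identity is verified, the rest is a verbatim repetition of the Duhamel--energy mechanism of Theorem~\ref{thm.un.pos.1}, with the single substitution $\|\cdot\|_{L^{\infty}}\mapsto\|\cdot\|_{L^{2Q/\nu}}\cdot\|\cdot\|_{H^{\nu/2}}$ in the estimate of the mass term.
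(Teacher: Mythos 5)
Your proof is correct and follows essentially the same route as the paper: the Duhamel/Minkowski estimate \eqref{mink} from Theorem~\ref{thm.un.pos.1}, followed by the H\"older--Sobolev bound $\|f_\epsilon(\sigma,\cdot)\|_{L^2(\G)}\leq\|\tilde V_\epsilon-V_\epsilon\|_{L^{2Q/\nu}(\G)}\|\sqrt{\R}\tilde u_\epsilon(\sigma,\cdot)\|_{L^2(\G)}$ and the negligibility/moderateness bookkeeping. Your explicit verification of the exponent pairing, and your remark that $Q>\nu$ is implicitly needed even in the first bullet for the embedding into $L^{2Q/(Q-\nu)}(\G)$, are useful details the paper leaves unstated.
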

\begin{proof}
We keep the notation used in the proof of Theorem \ref{thm.un.pos.1} and adapt the reasoning there if necessary. Observe that the next inequality follows by H\"older's inequality combined with the Sobolev embeddings \eqref{inclusions}:
\[
    \|(\tilde{V}_\epsilon-V_\epsilon)(\cdot)\tilde{u}_\epsilon(t,\cdot)\|_{L^2(\G)} \leq \|\tilde{V}_\epsilon-V_\epsilon\|_{L^{\frac{2Q}{\nu }}(\G)}\|\sqrt{\R}\tilde{u}_\epsilon(t,\cdot)\|_{L^2(\G)}\,,
    \]
    uniformly in $t\in [0,T]$. Now, if in \eqref{mink} we plug in the formula for $f_\epsilon(\sigma,\cdot)$ and the last estimate, then we get 
    \[
    \|U_\epsilon(t,\cdot)\|_{L^2(\G)} \lesssim \|u_{0,\epsilon}-\tilde{u}_{0,\epsilon}\|_{L^2(\G)}+ \|\tilde{V}_\epsilon-V_\epsilon\|_{L^{\frac{2Q}{\nu}}(\G)}\|\tilde{u}_\epsilon(t,\cdot)\|_{H^{\frac{\nu}{2}}(\G)}
    \]
    Taking into account the negligiblity and moderateness assumptions we deduce that 
    \[
    \|U_\epsilon(t,\cdot)\|_{L^2(\G)} \lesssim \omega(\epsilon)^{N}\,, \quad \text{for all}\, N \in \mathbb{N}\,,
    \]
    uniformly in $t \in [0,T]$, and for some suitable $\omega(\epsilon)$ and this finishes the proof of Theorem \ref{thm.un.pos2}.
\end{proof}

In the next theorem we prove the uniqueness of the very weak solutions to \eqref{heat.eq} in the general case of a real coefficient $V$. 

\begin{theorem}\label{thm.un.any}
Let $V\in \mathbb{R}$. Let also $u_0 \in L^2(\G)\cup \mathcal{E}'(\G)$ and $V \in L^{\infty}(\G)\cup \mathcal{E}'(\G)$. Then, the very weak solution to the Cauchy problem \eqref{heat.eq} is $(L^{\infty}(\G),L^\infty (\G))$-unique or $(L^{\infty}(\G), L^{\frac{2Q}{\nu}}(\G))$-unique, where in the second case we assume that $2Q \geq \nu$.
\end{theorem}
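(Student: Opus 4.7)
The plan is to mirror the proofs of Theorem \ref{thm.un.pos.1} and Theorem \ref{thm.un.pos2}, with the energy estimate of Lemma \ref{techn.lem} (which requires $V\geq 0$) replaced by the Grönwall-type bound \eqref{prop.neg.a.claim} of Proposition \ref{prop.neg.a}, which is available for real-valued $V$. First I would set $U_\epsilon:=u_\epsilon-\tilde u_\epsilon$ and observe that it solves the inhomogeneous Cauchy problem
\begin{equation*}
\partial_t U_\epsilon+\R U_\epsilon+V_\epsilon U_\epsilon=(\tilde V_\epsilon-V_\epsilon)\tilde u_\epsilon,\qquad U_\epsilon(0,\cdot)=u_{0,\epsilon}-\tilde u_{0,\epsilon}.
\end{equation*}
Exactly as in Theorem \ref{thm.un.pos.1}, Duhamel's principle decomposes $U_\epsilon$ into a solution $W_\epsilon$ of the homogeneous problem with initial datum $u_{0,\epsilon}-\tilde u_{0,\epsilon}$, plus the integral in $\sigma\in[0,t]$ of solutions $\tilde U_\epsilon(\cdot;\sigma)$ to homogeneous problems starting at time $\sigma$ with datum $f_\epsilon(\sigma,\cdot):=(\tilde V_\epsilon-V_\epsilon)(\cdot)\,\tilde u_\epsilon(\sigma,\cdot)$.

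Applying \eqref{prop.neg.a.claim} to each of these homogeneous problems and invoking Minkowski's integral inequality gives, uniformly in $t\in[0,T]$,
\begin{equation*}
\|U_\epsilon(t,\cdot)\|_{L^2(\G)}\lesssim \exp\bigl(T\|V_\epsilon\|_{L^\infty(\G)}\bigr)\left(\|u_{0,\epsilon}-\tilde u_{0,\epsilon}\|_{L^2(\G)}+\int_0^T\|f_\epsilon(\sigma,\cdot)\|_{L^2(\G)}\,d\sigma\right).
\end{equation*}
For the $(L^\infty,L^\infty)$-case, I would estimate $\|f_\epsilon(\sigma,\cdot)\|_{L^2(\G)}\leq \|\tilde V_\epsilon-V_\epsilon\|_{L^\infty(\G)}\|\tilde u_\epsilon(\sigma,\cdot)\|_{L^2(\G)}$, noting that the last factor is $L^2$-moderate by Theorem \ref{th.ex.heat.any}. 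For the $(L^\infty,L^{2Q/\nu})$-case, applying Hölder's inequality together with the Sobolev embedding \eqref{inclusions} in the manner used in the proof of Theorem \ref{thm.un.pos2} yields
\begin{equation*}
\|f_\epsilon(\sigma,\cdot)\|_{L^2(\G)}\leq \|\tilde V_\epsilon-V_\epsilon\|_{L^{2Q/\nu}(\G)}\,\|\sqrt{\R}\,\tilde u_\epsilon(\sigma,\cdot)\|_{L^2(\G)},
\end{equation*}
and Corollary \ref{cor.any.sob} ensures that $\tilde u_\epsilon$ is $H^{\nu/2}$-moderate, bounding the last factor.

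The main obstacle, and the sole point that distinguishes the argument from the $V\geq 0$ case, is the exponential factor $\exp(T\|V_\epsilon\|_{L^\infty(\G)})$, which is not an issue in Theorem \ref{thm.un.pos.1} thanks to Lemma \ref{techn.lem}. To absorb it, I would follow the same gauge choice used in the existence result (Theorem \ref{th.ex.heat.any}), namely $\omega(\epsilon)=(\log\epsilon^{-n_0})^{-1/n_0}$ with $n_0$ coming from the $L^\infty$-moderateness of $V_\epsilon$, so that $\|V_\epsilon\|_{L^\infty(\G)}\lesssim \log\epsilon^{-n_0}$ and thus $\exp(T\|V_\epsilon\|_{L^\infty(\G)})\lesssim \epsilon^{-Tn_0}$, i.e. only a polynomial growth in $\epsilon^{-1}$. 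Combined with the arbitrary-fast decay $\omega(\epsilon)^N$, $N\in\mathbb{N}$, furnished by the negligibility assumptions on $u_{0,\epsilon}-\tilde u_{0,\epsilon}$ and on $\tilde V_\epsilon-V_\epsilon$, and the moderate (polynomial) growth of $\tilde u_\epsilon$, this yields $\|U_\epsilon(t,\cdot)\|_{L^2(\G)}\lesssim \omega(\epsilon)^N$ for every $N\in\mathbb{N}$, uniformly in $t\in[0,T]$, which is the required $L^2(\G)$-negligibility.
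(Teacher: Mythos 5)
Your proposal is correct and follows essentially the same route as the paper: the Duhamel decomposition into homogeneous problems, the double application of the Gr\"onwall-type bound \eqref{prop.neg.a.claim}, the logarithmic gauge $\omega(\epsilon)=(\log\epsilon^{-N_0})^{-1/N_0}$ to tame the exponential factor, and the two estimates of $\|f_\epsilon(\sigma,\cdot)\|_{L^2(\G)}$ (via $L^\infty$ and via H\"older with the Sobolev embedding together with Corollary \ref{cor.any.sob}) all match the paper's argument. Your explicit identification of the exponential factor as the sole new obstacle, and your bound $\exp(T\|V_\epsilon\|_{L^\infty(\G)})\lesssim\epsilon^{-TN_0}$, in fact states this step more cleanly than the paper does.
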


\begin{proof}
The arguments developed here follow the same line as in Theorems \ref{thm.un.pos.1} and \ref{thm.un.pos2}. The notation used there is adopted here as well. First we recall the estimate for the solution $U_\epsilon$ yo the Cauhcy problem \eqref{forU1}:
\[
\|U_\epsilon(t,\cdot)\|_{L^2(\G)}\leq \|W_\epsilon(t,\cdot)\|_{L^2(\G)}+\int_{0}^{T}\|\tilde{U}_\epsilon(t-\sigma,\cdot;\sigma)\|_{L^2(\G)}\,d\sigma\,.
\]
Now, the $L^2$-norms appearing on the right-hand side of the above inequality shall be estimated after applying twice the inequality \eqref{prop.neg.a.claim} for the classical solution to \eqref{heat.eq} in the case of a real potential. In particular we obtain
\begin{equation}\label{EQ:thm.un.3a}
  \|U_\epsilon(t,\cdot)\|_{L^2(\G)}\lesssim \|u_{0,\epsilon}-\tilde{u}_{0,\epsilon}\|_{L^2(\G)}\exp\left(t \|V_\epsilon\|_{L^\infty (\G)} \right)+\|f_\epsilon(\sigma,\cdot)\|_{L^2(\G)}\exp\left(t \|V_\epsilon\|_{L^\infty (\G)} \right)\,.  
\end{equation}
It is necessary for our purposes to consider the $L^\infty$-moderateness of the net $V_\epsilon$ under the choice of the function $\omega$ to be given by $\omega(\epsilon):=\left(\log \epsilon^{-N_0} \right)^{-\frac{1}{N_0}}$, where $N_0 \in \mathbb{N}$ is such that the moderateness assumption can be expressed as:
\[
\|V_\epsilon\|_{L^\infty} \lesssim \omega(\epsilon)^{-N_0}=\log (\epsilon^{-N_0})\,,
\]
uniformly in $\epsilon \in (0,1]$.
Let us plug into the estimate \eqref{EQ:thm.un.3a} the moderateness assumption as above and get 
\begin{equation}
    \label{EQ:thm.un.3b}
    \|U_\epsilon(t,\cdot)\|_{L^2(\G)}\lesssim e^{-TN_0} \left[\|u_{0,\epsilon}-\tilde{u}_{0,\epsilon}\|_{L^2(\G)}+ \|f_\epsilon(\sigma,\cdot)\|_{L^2(\G)}\right]\,,
\end{equation}
uniformly in $\epsilon \in (0,1]$. We next need to consider the negligibility of nets involved into \eqref{EQ:thm.un.3b}. First consider the case where the net $\tilde{V}_\epsilon-V_\epsilon$ is $L^\infty$-negligible. Then, using the estimate 
\[
\|f_\epsilon(\sigma,\cdot)\|_{L^2} \leq \|\tilde{V}_\epsilon-V_\epsilon\|_{L^\infty} \|\tilde{u}_\epsilon\|_{L^2}\,,
\]
and the $L^2$-moderateness of $\tilde{u}_\epsilon$ as being the very weak solution to \eqref{heat.eq} we get 
\[
\|f_\epsilon(\sigma,\cdot)\|_{L^2} \leq \omega(\epsilon)^{-N}\,,
\]
for any $N \in \mathbb{N}$. The latter combined with the $L^2$-negligibility assumption of the net $u_{0,\epsilon}-\tilde{u}_{0,\epsilon}$  under the estimate \eqref{EQ:thm.un.3b} implies the $L^2$-negligiblity of the net $u_\epsilon-\tilde{u}_\epsilon$ and we have shown that the very weak solution to the Cauchy problem \eqref{heat.eq} is $(L^{\infty}(\G),L^\infty (\G))$-unique. Now, for the case where the net $\tilde{V}_\epsilon-V_\epsilon$ is $L^{\frac{2Q}{\nu}}$-negligible, applying H\"older's inequality and using the Sobolev embedding \eqref{inclusions} we get 
\[
   \|f_\epsilon(\sigma,\cdot)\|_{L^2} \leq  \|(\tilde{V}_\epsilon-V_\epsilon)(\cdot)\tilde{u}_\epsilon(t,\cdot)\|_{L^2(\G)} \leq \|\tilde{V}_\epsilon-V_\epsilon\|_{L^{\frac{2Q}{\nu }}(\G)}\|\sqrt{\R}\tilde{u}_\epsilon(t,\cdot)\|_{L^2(\G)}\,.
    \]
    Now, since by Corollary \ref{cor.any.sob} the very weak solution $\tilde{u}_\epsilon$ is $H^{\frac{\nu}{2}}$-moderate, by the last estimate we deduce that the net $f_\epsilon$ is $L^2$-negligible, and consequently also the difference of nets $u_{0,\epsilon}-\tilde{u}_{0,\epsilon}$. The last observations implies that the very weak solution to the problem \eqref{heat.eq} is $(L^\infty,L^{\frac{2Q}{\nu}})$-unique, and the proof of Theorem \ref{thm.un.any} complete.
\end{proof}

To summarize in the general case $V \in \mathbb{R}$, the very weak solution to the Cauchy problem \eqref{heat.eq} can be $(X,Y)$-unique with $X=L^{\infty}$ and $Y=L^{\infty} \cup L^{\frac{2Q}{\nu}}$, while in the case $V\geq 0$ we allow $X=L^{\infty} \cup \{L^{\frac{Q}{\nu}}\cap L^{\frac{2Q}{\nu}} \}$.

\section{Consistency result}
 
 We conclude this paper by proving the consistency result with the classical case. This means to show that when the Cauchy problem \eqref{heat.eq} is well posed, then the very weak solution can be recaptured by the classical one in the $L^2$-sense. 
 To see this we need to assume that the approximating, to $V$, nets, as described in Definitions \ref{def.vws.pos} and \ref{def.vws.any} above on the existence of the very weak solution, are regularisations of $V$ via a Friedrichs mollifier. As in the previous sections, the cases of a non-negative and of a real potential will be treated separately.
 
 Before we engage with the proof of our claim, we recall the following space of functions:
 \begin{definition}[Space $C_0(\G)$]
 We denote by $C_0(\G)$ the space of continuous functions that vanish at infinity; i.e., we write $f \in C_0(\G)$, if for every $\delta>0$ there exists a compact set $K_\delta$ such that 
 \[
 |f|< \delta \quad \text{on}\quad \G\setminus K_\delta\,. 
 \]
 \end{definition}

 We note that the space $(C_0(\G),\|\cdot\|_{L^\infty})$ is a Banach space. The following observation is essential for our purposes:
 \begin{remark}\label{rem.Linfty}
 For a function $f \in C_0(\G)$, we know that for the corresponding regularised net $f_\epsilon$ we have: \[
 \|f_\epsilon\|_{L^\infty(\G)}\leq C<\infty\,,
 \] 
 uniformly in $\epsilon \in (0,1]$. 
 \end{remark}

\begin{theorem}[Consistency with the classical solution, case $V \geq 0$]
\label{thm.consis.pos.1}
 Let $V$ be a non-negative potential, and let $V_\epsilon$ be its regularisation. Assume also that $u_0 \in H^{\frac{ \nu}{2}}(\G)$. If one of the two following conditions is satisfied, then the regularised net $u_\epsilon$ converges, as $\epsilon \rightarrow 0$, in  $L^2(\G)$, to the classical solution $u$ given by Proposition \ref{prop.pos.b}:
 \begin{enumerate}
     \item \label{itm1} We have $Q > \nu $, and $V \in L^{\frac{2Q}{\nu }}(\G)\cap L^{\frac{Q}{\nu }}(\G) $;
     \item \label{itm2} $V \in C_0(\G)$.
 \end{enumerate}

\end{theorem}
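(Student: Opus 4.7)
The plan is to form the difference $W_\epsilon := u_\epsilon - u$ between the regularised net and the classical solution and analyse the inhomogeneous Cauchy problem it satisfies. Subtracting \eqref{heat.eq} from \eqref{reg.heat.eq} gives
\[
\partial_t W_\epsilon + \mathcal{R} W_\epsilon + V_\epsilon\, W_\epsilon = (V - V_\epsilon)\, u, \qquad W_\epsilon(0,\cdot) = u_{0,\epsilon} - u_0,
\]
and since $V \geq 0$ and the Friedrichs mollifier is non-negative we have $V_\epsilon \geq 0$, so Lemma \ref{techn.lem} bounds the homogeneous propagator with potential $V_\epsilon$ uniformly in $\epsilon$. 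Decomposing $W_\epsilon$ by Duhamel's principle exactly as in the proof of Theorem \ref{thm.un.pos.1} and applying \eqref{en.est} to both the free and forced pieces, one arrives at
\[
\|W_\epsilon(t,\cdot)\|_{L^2(\G)} \leq \|u_{0,\epsilon} - u_0\|_{L^2(\G)} + \int_0^T \|(V - V_\epsilon)(\cdot)\, u(\sigma,\cdot)\|_{L^2(\G)}\, d\sigma,
\]
uniformly in $t \in [0,T]$.

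Next I would show that both terms on the right-hand side tend to zero as $\epsilon \to 0$. The first term vanishes by the standard $L^2$-convergence of Friedrichs mollifications, since $u_0 \in H^{\frac{\nu}{2}}(\G) \subset L^2(\G)$. For the second term I would split into the two cases. In case (1), H\"older's inequality with exponents $\frac{2Q}{\nu}$ and $\frac{2Q}{Q-\nu}$ together with the Sobolev embedding \eqref{inclusions} (applied with $b = \nu/2$, $a=0$, $\tilde{q}_0 = 2$, $q_0 = \frac{2Q}{Q-\nu}$) bounds the integrand by $\|V - V_\epsilon\|_{L^{\frac{2Q}{\nu}}(\G)} \|u(\sigma,\cdot)\|_{H^{\frac{\nu}{2}}(\G)}$; Proposition \ref{prop.pos.b} ensures the time-integral of $\|u(\sigma,\cdot)\|_{H^{\frac{\nu}{2}}(\G)}$ is finite, while $\|V - V_\epsilon\|_{L^{\frac{2Q}{\nu}}(\G)} \to 0$ by the $L^p$-continuity of translations for $1 \leq p < \infty$. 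In case (2), uniform continuity of $V \in C_0(\G)$ yields $\|V - V_\epsilon\|_{L^\infty(\G)} \to 0$, and then the integrand is controlled by $\|V - V_\epsilon\|_{L^\infty(\G)} \|u(\sigma,\cdot)\|_{L^2(\G)}$, with the $d\sigma$-integral finite by Proposition \ref{prop.pos.a}.

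The key technical point---and in my view the most delicate step---is verifying that $V_\epsilon \to V$ in the correct topology for each hypothesis: $L^{\frac{2Q}{\nu}}(\G)$ in case (1) and $L^\infty(\G)$ in case (2). Both are standard properties of mollifiers (the latter relying on the fact that $C_0(\G)$ functions on the locally compact group $\G$ are uniformly continuous, so their regularisations converge uniformly, in line with Remark \ref{rem.Linfty}). Once these $L^p$-convergences are in hand, the rest of the argument is a direct combination of the Duhamel representation with the uniform energy bounds already at our disposal, yielding $\|W_\epsilon(t,\cdot)\|_{L^2(\G)} \to 0$ uniformly in $t \in [0,T]$.
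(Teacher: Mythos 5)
Your proposal is correct and follows essentially the same route as the paper: the difference $W_\epsilon$ solves the same inhomogeneous problem, Duhamel plus the energy bound of Lemma \ref{techn.lem} gives the same $L^2$ estimate, and the two cases are closed by exactly the same H\"older/Sobolev-embedding bound (case (1)) and $L^\infty$ bound (case (2)), with the mollifier convergences $\|V-V_\epsilon\|_{L^{2Q/\nu}}\to 0$ and $\|V-V_\epsilon\|_{L^\infty}\to 0$ justified in the same way (the paper cites Lemmas 3.1.58--3.1.59 of \cite{FR16} for the latter). No substantive differences.
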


\begin{proof}
For $u$ and for $u_\epsilon$ as in the hypothesis, we introduce the auxiliary notation $W_\epsilon(t,x):=u(t,x)-u_\epsilon(t,x)$. Then, the net $W_\epsilon$ is a solution to the Cauchy problem
	\begin{equation}\label{eq.m=0}
	\begin{cases*}
		\partial_{t}W_\epsilon(t,x)+\R W_\epsilon(t,x)+V_\epsilon(x)W_\epsilon(t,x)=f_\epsilon(t,x), \\
		W_\epsilon(0,x)=(u_0-u_{0,\epsilon})(x)\,,
	\end{cases*}
	\end{equation}
	where we denote $f_\epsilon(t,x):=(V_\epsilon(x)-V(x))u(t,x)$.
Analogously to Theorem \ref{thm.un.pos.1}, an application of Duhamel's principle gives:
\begin{eqnarray}
\label{cons.thm1.heat}
 \|W_\epsilon(t,\cdot)\|_{L^2(\G)} & \lesssim & \|u_{0}-u_{0,\epsilon}\|_{L^2(\G)}+\int_{0}^{T}\|f_\epsilon(\sigma,\cdot)\|_{L^2(\G)}\,d\sigma\nonumber\\
 & = & \|u_{0}-u_{0,\epsilon}\|_{L^2(\G)}+ \int_{0}^{T}\|(V_\epsilon-V)(\cdot)u(t,\cdot)\|\,d \sigma\,,
\end{eqnarray}
 where we have applied Minkowski's integral inequality. 
 
 Let us first consider the case \eqref{itm1}: standard arguments from functional analysis imply that the below $L^p$-norms converge to $0$ as $\epsilon \rightarrow 0$:
\[
\|u_{0}-u_{0,\epsilon}\|_{L^2(\G)}\,,\|V_\epsilon-V\|_{L^{\frac{2Q}{\nu }}(\G)}\,.
\]
The integrated quantity on the right-hand side of \eqref{cons.thm1.heat} is finite since
\[
\|(V_\epsilon-V)(\cdot)u(t,\cdot)\|\lesssim  \|V_\epsilon-V\|_{L^{\frac{2Q}{\nu }}(\G)}\|\sqrt{\R}u(\sigma,\cdot)\|_{L^2(\G)}\,,
\]
as follows by the Sobolev embeddings \eqref{emb},
and consequently, since $u \in H^{\frac{\nu}{2}}(\G)$, we also have the convergence
\[
\|(V_\epsilon-V)(\cdot)u(\sigma,\cdot)\|_{L^2(\G)} \lesssim \|V_\epsilon-V\|_{L^{\frac{2Q}{\nu }}(\G)}\|\sqrt{\R}u(\sigma,\cdot)\|_{L^2(\G)} \rightarrow 0\,.
\]
Combining  \eqref{cons.thm1.heat} with the below observations and using Lebesgue's dominated convergence theorem we get 
 \begin{equation}
     \label{duh.e.0}
      \|W_\epsilon(t,\cdot)\|_{L^2(\G)} \rightarrow 0\,,
 \end{equation}
 uniformly in $t \in [\sigma,T]$, where $\sigma$ is taken to be fixed. The last conludes the proof of the theorem for the case \eqref{itm1}.
 
 To deal with the case  \eqref{itm2}, inequality on the right-hand side of \eqref{cons.thm1.heat} shall now be estimated as:
 \[
  \|(V_\epsilon-V)(\cdot)u(\sigma,\cdot)\|_{L^2(\G)}\leq \|V_\epsilon-V\|_{L^\infty}\|u(\sigma,\cdot)\|_{L^2}\,.
 \]
 Using Lemmas 3.1.58, 3.1.59 in \cite{FR16} for $V \in C_{0}(\G)$ we have the convergence: $\|V_\epsilon-V\|_{L^\infty}\rightarrow 0$, as $\epsilon \rightarrow 0$, and the last implies in turn that convergence in \eqref{duh.e.0} is satisfied under the alternative assumptions \eqref{itm2}. Summarising the above, we have shown that the very weak solution converges to the classical one in $L^2$ in both cases \eqref{itm1} and \eqref{itm2}, and the proof of Theorem \ref{thm.consis.pos.1} is complete.
\end{proof}

\begin{theorem}\label{thm.consis.2}
Let $V$ be a real coefficient, and let $u_0 \in L^2(\G)$. Assume also that  $V \in C_0(\G)$, and also that $V_\epsilon$ is a regularisation of $V$. Then, the regularised net $(u_\epsilon)_\epsilon$ converges, as $\epsilon \rightarrow 0$, in $L^2(\G)$, to the classical solution $u$ given by Proposition \ref{prop.neg.a}. 
\end{theorem}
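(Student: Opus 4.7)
The plan is to mirror the strategy of Theorem \ref{thm.consis.pos.1}, case \eqref{itm2}, but adapted to the real-potential setting where Lemma \ref{techn.lem} is no longer available and must be replaced by the exponential bound of Proposition \ref{prop.neg.a}. Setting $W_\epsilon(t,x):=u(t,x)-u_\epsilon(t,x)$, the net $W_\epsilon$ satisfies the inhomogeneous Cauchy problem
\begin{equation*}
\begin{cases}
\partial_t W_\epsilon(t,x)+\R W_\epsilon(t,x)+V_\epsilon(x)W_\epsilon(t,x)=f_\epsilon(t,x)\,,\\
W_\epsilon(0,x)=(u_0-u_{0,\epsilon})(x)\,,
\end{cases}
\end{equation*}
with source term $f_\epsilon(t,x):=(V_\epsilon(x)-V(x))u(t,x)$. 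Applying Duhamel's principle, splitting $W_\epsilon$ as the sum of the solution to the homogeneous problem with data $u_0-u_{0,\epsilon}$ and an integral of solutions to homogeneous problems starting at time $\sigma$ with data $f_\epsilon(\sigma,\cdot)$, and using the estimate \eqref{prop.neg.a.claim} on each piece together with Minkowski's integral inequality, I would obtain
\begin{equation*}
\|W_\epsilon(t,\cdot)\|_{L^2(\G)}\lesssim \exp\bigl(T\|V_\epsilon\|_{L^\infty(\G)}\bigr)\left[\|u_0-u_{0,\epsilon}\|_{L^2(\G)}+\int_0^T \|f_\epsilon(\sigma,\cdot)\|_{L^2(\G)}\,d\sigma\right]\,,
\end{equation*}
uniformly in $t\in[0,T]$.

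The main obstacle I anticipate is the exponential prefactor $\exp(T\|V_\epsilon\|_{L^\infty(\G)})$: since there is no sign constraint on $V$, the reasoning used in the $V\geq 0$ setting (where the bare energy estimate $\|u\|_{L^2}\leq \|u_0\|_{L^2}$ suffices) is unavailable, and a priori the exponential could blow up as $\epsilon\to 0$. This is precisely where the hypothesis $V\in C_0(\G)$ intervenes: by Remark \ref{rem.Linfty}, the regularisation $V_\epsilon$ is uniformly bounded in $L^\infty(\G)$, namely $\|V_\epsilon\|_{L^\infty(\G)}\leq C$ independently of $\epsilon\in(0,1]$. Consequently the exponential prefactor is uniformly bounded by $e^{TC}$ and can be absorbed into the implicit constant.

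It remains to show that the bracketed expression tends to zero. First, standard properties of mollification yield $\|u_0-u_{0,\epsilon}\|_{L^2(\G)}\to 0$ as $\epsilon\to 0$. Second, by [Lemmas 3.1.58, 3.1.59 in \cite{FR16}], the hypothesis $V\in C_0(\G)$ guarantees $\|V_\epsilon-V\|_{L^\infty(\G)}\to 0$. Third, the classical solution $u$ supplied by Proposition \ref{prop.neg.a} satisfies $\|u(\sigma,\cdot)\|_{L^2(\G)}\leq \exp(T\|V\|_{L^\infty(\G)})\|u_0\|_{L^2(\G)}$, uniformly in $\sigma\in[0,T]$. Combining these with the pointwise bound
\begin{equation*}
\|f_\epsilon(\sigma,\cdot)\|_{L^2(\G)}\leq \|V_\epsilon-V\|_{L^\infty(\G)}\,\|u(\sigma,\cdot)\|_{L^2(\G)}\,,
\end{equation*}
Lebesgue's dominated convergence theorem implies $\int_0^T\|f_\epsilon(\sigma,\cdot)\|_{L^2(\G)}\,d\sigma\to 0$. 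Hence $\|W_\epsilon(t,\cdot)\|_{L^2(\G)}\to 0$ uniformly in $t\in[0,T]$, which is the claimed $L^2$-convergence of the very weak solution to the classical one.
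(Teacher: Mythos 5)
Your proposal is correct and follows essentially the same route as the paper: Duhamel's principle for the difference $W_\epsilon=u-u_\epsilon$, the exponential estimate \eqref{prop.neg.a.claim} applied to each homogeneous piece, the uniform $L^\infty$-bound on $V_\epsilon$ from Remark \ref{rem.Linfty} to control the exponential prefactor, and the convergences $\|u_0-u_{0,\epsilon}\|_{L^2(\G)}\to 0$ and $\|V_\epsilon-V\|_{L^\infty(\G)}\to 0$ already established in Theorem \ref{thm.consis.pos.1}. The only cosmetic difference is that you bound $\exp(t\|V_\epsilon\|_{L^\infty})$ and $\exp(\sigma\|V_\epsilon\|_{L^\infty})$ by $\exp(T\|V_\epsilon\|_{L^\infty})$ up front, whereas the paper keeps them inside the estimate before invoking the same remark.
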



\begin{proof}[Proof of Theorem \ref{thm.consis.2}]
We keep the same notation as in Theorem \ref{thm.consis.pos.1}. Using the estimate \eqref{prop.neg.a.claim} and arguments similar to those developed in Theorem \ref{thm.consis.pos.1} we obtain 
\begin{eqnarray*}
\|W_\epsilon(t,\cdot)\|_{L^2(\G)} & \lesssim & \exp(t \|V_\epsilon\|_{L^\infty}) \|u_0-u_{0,\epsilon}\|_{L^2(\G)}\\
& + & \|V_\epsilon-V\|_{L^\infty}\int_{0}^{T}\exp(\sigma \|V_\epsilon\|_{L^\infty})\|u(\sigma,\cdot)\|_{L^2(\G)}\,d\sigma\,,
\end{eqnarray*}
uniformly in $t \in [0,T]$. Now, by Remark \ref{rem.Linfty} and since the convergence in $L^2$-norm and in $L^\infty$-norm of the differences of nets $u_0-u_{0,\epsilon}$ and $V_\epsilon-V$, respectively, is as we showed in Theorem \ref{thm.consis.pos.1}, we see that $u_\epsilon \rightarrow u$ in the $L^2$-sense, and the proof of Theorem \ref{thm.consis.2} is complete.
\end{proof}

\begin{remark}\label{finrem}
To be consistent with the previous work \cite{ARST21c} on the heat equation on the real case, let us point out that the assumption on the functional space $L^\infty(\G)$ of the coefficient $V$ (denoted by $q$ there) as appears in the consistency results (see Theorems 2.8 and 3.5 in  \cite{ARST21c}) should be restricted to its subspace $C_0(\G)$. Indeed, under such assumptions, the necessary $L^\infty$-convergence of the net $V_\epsilon-V$ is granted, and Theorems 2.8 and 3.5 in \cite{ARST21c} would then follow as a spacial case of Theorem \ref{thm.consis.pos.1} and \ref{thm.consis.2}, respectively, in the particular case where the group $\G$ is the trivial one and the Rockland operator is the usual Laplace operator; in symbols, when $\G=\mathbb R^d$ and $\mathcal{R}=-\Delta$. 
\end{remark}

\end{document}